\documentclass[12pt,twoside]{amsart}
\usepackage{latexsym,amsfonts,amsmath,amssymb,soul}
\usepackage{enumerate,soul}
\usepackage[usenames, dvipsnames]{color}
\numberwithin{equation}{section}

\makeatletter
\@namedef{subjclassname@2020}{%
  \textup{2020} Mathematics Subject Classification}
\makeatother

\newtheorem{Th}{Theorem}[section]
\newtheorem{Rem}[Th]{Remark}

\newtheorem{Lemma}[Th]{Lemma}

\newtheorem{Prop}[Th]{Proposition}
\newtheorem{Cor}[Th]{Corollary}

\catcode`\@=11

\renewcommand{\section}%
   {\setcounter{equation}{0}\@startsection {section}{1}{\z@}{-3.5ex plus -1ex
  minus -.2ex}{2.3ex plus .2ex}{\Large\bf}}

\def\Aff{\mathop{\rm Aff}\nolimits}
\def\ds{\displaystyle}
\def\R{\mathbb R}

\def\N{\mathbb N}

\newcommand{\F}{\mathcal{F}}
\newcommand{\M}{\mathcal{M}}

\newcommand{\Sch}{\mathcal{S}}

\newcommand{\beqsn}{\arraycolsep1.5pt\begin{eqnarray*}}
\newcommand{\eeqsn}{\end{eqnarray*}\arraycolsep5pt}
\newcommand{\beqs}{\arraycolsep1.5pt\begin{eqnarray}}
\newcommand{\eeqs}{\end{eqnarray}\arraycolsep5pt}

\title{Umbrella theorems for time-frequency representations}

\author[Boggiatto]{Paolo Boggiatto}
\address{Dipartimento di Matematica\\ Universit\`a di Torino\\
 Via Carlo Alberto n.~10\\ I-10123 Torino\\ Italy}
 \email{paolo.boggiatto@unito.it}

\author[Boiti]{Chiara Boiti}
\address{
Dipartimento di Matematica e Informatica \\Universit\`a di Ferrara\\
Via Ma\-chia\-vel\-li n.~30\\
I-44121 Ferrara\\
Italy}
\email{chiara.boiti@unife.it}

\author[Oliaro]{Alessandro Oliaro}
\address{Dipartimento di Matematica\\ Universit\`a di Torino\\
 Via Carlo Alberto n.~10\\ I-10123 Torino\\ Italy}
 \email{alessandro.oliaro@unito.it}

\begin{document}

\keywords{Umbrella Theorems, Wigner distribution, Rihaczek distribution, Mellin transform,
compactness criteria}
\subjclass[2020]{Primary 
42B10, 
46B50; 
Secondary 
42C05 
}

\begin{abstract}
An uncertainty principle due to H.S. Shapiro, the so-called Umbrella Theorem, asserts that there is no square integrable function uniformly dominating all the elements of an orthonormal family in $L^2(\R)$ and their Fourier transforms, unless the sequence is finite. In this paper we present some results on Umbrella Theorems in $L^2(\R^d)$ related to time-frequency representations. We further extend the analysis to the case of $L^2(\R^+)$, by means of the Mellin transform.

\end{abstract}

\maketitle

\begin{center}
\emph{Dedicated to the memory of our friend and colleague Luisa Zanghirati.}
\end{center}


\markboth{\sc  Umbrella Theorems for time-frequency representations}
 {\sc P.~Boggiatto, C.~Boiti, A.~Oliaro}

\section{Introduction}
\label{sec1}

The uncertainty principle is one of the central themes in harmonic analysis: a non-zero function and
its Fourier transform cannot both be sharply concentrated. Depending on the definition of ``concentration'' one gets different uncertainty principles, ranging from the classical Heisenberg inequality to a variety of different other formulations (see \cite{FS} for a survey). Moreover, it is significant to study these kind of limitations not only on a single function, but on sets of functions with suitable properties, looking how such limitations interact with the properties of the considered set; in this direction uncertainty principles involving orthonormal sequences of functions have been considered (see, for instance, \cite{BJO,RSST,JP,M,P}). Over the years many other authors studied uncertainty principles involving various time-frequency
distributions (see, for instance, \cite{BFG,BDJ,GL,FG,G, GZ}).

In this paper we focus in particular on the so-called Shapiro's Umbrella Theorem, that states that if $\{g_n\}$ is an orthonormal
sequence in $L^2(\mathbb{R})$ satisfying, for all $n$ and for almost every $x, \omega \in \mathbb{R}$,
\begin{equation*}
	|g_n(x)| \leq |\varphi(x)|, \qquad |\hat{g}_n(\omega)| \leq |\psi(\omega)|,
\end{equation*}
for some $\varphi, \psi \in L^2(\mathbb{R})$, then the sequence $\{g_n\}$ must be finite (see for instance \cite[Thm.~1.2]{JP}; this result was actually proved by Shapiro in an unpublished manuscript of 1991). Here and in the following the {\em Fourier
	transform} is defined, for $f\in L^2(\R^d)$, with $d\geq1$, by
\beqsn
\F f(\omega)=\hat f(\omega):=\int_{\R^d}f(x)e^{-2\pi ix\cdot\omega}dx,\qquad\omega\in\R^d.
\eeqsn

Since functions and their Fourier transform are involved, it is natural to look for conditions 
on time-frequency representations instead. To this aim we draw attention to a
simple proof of the above Shapiro's Umbrella Theorem obtained in \cite{RSST} by means of the following compactness criterion (see \cite[Thm.~3]{Peg}, or \cite[Thm.~2.1]{RSST} for a simple proof in the 1-dimensional case):

\begin{Th}
	\label{thRSST21}
	Let $K$ be a bounded subset of $L^2(\R^d)$. Then $K$ is relatively compact in $L^2(\R^d)$ if and only if
	\beqs
	\label{1}
	\lim_{\xi\to+\infty}\sup_{f\in K}\int_{|x|>\xi}(|f(x)|^2+|\hat f(x)|^2)dx=0,
	\eeqs
	where $|x|$ is the Euclidean norm of $x\in\R^d$.
\end{Th}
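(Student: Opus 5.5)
The plan is to prove the two implications separately, using Plancherel and the Kolmogorov--Riesz (Fréchet--Kolmogorov) characterization of relatively compact sets in $L^2(\R^d)$. That criterion says a bounded $K$ is relatively compact if and only if it is uniformly tight, $\sup_{f\in K}\int_{|x|>R}|f|^2\to0$, and uniformly equicontinuous in mean, $\sup_{f\in K}\|f(\cdot+h)-f\|_2\to0$ as $h\to0$.

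\emph{Necessity.} Suppose $K$ is relatively compact. The map $f\mapsto \hat f$ is an isometry, so $\hat K=\{\hat f: f\in K\}$ is also relatively compact. For $\varepsilon>0$, cover $K$ by finitely many $L^2$-balls of radius $\varepsilon$ centered at $f_1,\dots,f_N$. Choose $\xi$ so large that $\int_{|x|>\xi}(|f_j|^2+|\hat f_j|^2)<\varepsilon^2$ for every $j$. For $f\in K$ pick $j$ with $\|f-f_j\|_2<\varepsilon$, hence also $\|\hat f-\hat f_j\|_2<\varepsilon$. The triangle inequality in $L^2(\{|x|>\xi\})$ then bounds the tail of $f$ and of $\hat f$ by $(2\varepsilon)^2$ each. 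This gives \eqref{1}.

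\emph{Sufficiency.} Assume \eqref{1}. Uniform tightness of $K$ follows immediately, so it remains to prove equicontinuity in mean. By Plancherel,
\[
\|f(\cdot+h)-f\|_2^2=\int_{\R^d}|e^{2\pi i h\cdot\omega}-1|^2|\hat f(\omega)|^2\,d\omega .
\]
Split this integral into the regions $|\omega|\le\xi$ and $|\omega|>\xi$. On the first region use $|e^{2\pi i h\cdot\omega}-1|\le 2\pi|h|\xi$, which gives at most $4\pi^2|h|^2\xi^2\sup_K\|f\|_2^2$. On the second region use $|e^{2\pi i h\cdot\omega}-1|\le2$, which gives at most $4\sup_K\int_{|\omega|>\xi}|\hat f|^2$. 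Given $\varepsilon$, first fix $\xi$ by \eqref{1} so the second term is below $\varepsilon/2$. Then take $|h|$ small, using that $K$ is bounded, so the first term is below $\varepsilon/2$. Kolmogorov--Riesz now yields relative compactness.

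If one prefers not to cite Kolmogorov--Riesz, there is a direct route. Let $P_\xi$ be the operator $f\mapsto\chi_{|x|\le\xi}\,\F^{-1}(\chi_{|\omega|\le\xi}\hat f)$. It is Hilbert--Schmidt: its kernel $\chi_{|x|\le\xi}(x)\,\F^{-1}\chi_{|\omega|\le\xi}(x-y)$ lies in $L^2(\R^{2d})$. Hence $P_\xi$ is compact, so $P_\xi K$ is relatively compact. Moreover $\|f-P_\xi f\|_2\le \|\chi_{|x|>\xi}f\|_2+\|\chi_{|\omega|>\xi}\hat f\|_2$, which is uniformly small on $K$ by \eqref{1}. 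Thus $K$ is totally bounded.

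The main obstacle is the sufficiency direction, namely converting frequency-tail control into a compactness mechanism. The Plancherel splitting, or alternatively the Hilbert--Schmidt truncation, is the key step; I would present the truncation version as the self-contained argument.
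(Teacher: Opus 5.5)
Your proof is correct. Only your truncation version, which you say you would present, departs from what the paper relies on.

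The paper does not prove this statement; it cites it. Its proof of the Mellin analogue, Theorem~\ref{thMellin1}, follows the cited argument, and your Kolmogorov--Riesz route is that argument in mirror image. The paper applies the Fr\'echet--Kolmogorov criterion (Theorem~\ref{thFK}) to the transformed family and rewrites translations of the transform as modulations of $f$. It then splits the resulting integral into a bounded region, where the phase factor is $O(|y|)$, and the spatial tail. You apply the criterion to $K$ itself and split in frequency instead. For necessity, the paper reads off the tightness condition for $K$ and for its (equally relatively compact) transform; your $\varepsilon$-net argument proves the same thing by hand.

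The truncation argument is genuinely different. It replaces the Fr\'echet--Kolmogorov theorem by the compactness of the Hilbert--Schmidt operator $P_\xi$, whose kernel has $L^2(\R^{2d})$-norm equal to the volume of the ball of radius $\xi$. The identity $f-P_\xi f=\chi_{|x|>\xi}f+\chi_{|x|\le\xi}\F^{-1}(\chi_{|\omega|>\xi}\hat f)$ shows that the approximation error is controlled exactly by the two tails in \eqref{1}. No intermediate modulus of continuity is needed.

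Combined with your $\varepsilon$-net argument for necessity, this gives a fully self-contained proof. It works in any dimension and transfers to $L^2(\R^+)$ via the isometry $E^*$. The Fr\'echet--Kolmogorov route instead uses only a standard criterion plus an elementary estimate, which is what lets the paper reuse it almost verbatim for the Mellin transform.
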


This compactness criterion implies Shapiro's Umbrella Theorem
since the sequence $\{g_n\}$ is bounded but cannot be relatively compact, being orthonormal
(cf. \cite[Thm.~2.2]{RSST}).

We are thus interested on similar compactness criteria in the framework of time-frequency analysis, where a rich collection of representations is available, each of them
encoding in a specific way the distribution of the energy of a function simultaneously in the time and frequency
variables. We shall mainly focus on the {\em Wigner distribution}
\begin{equation*}
	Wf(x,\omega) = \int_{\mathbb{R}^d} f\left(x + \frac{t}{2}\right)
	\overline{f\left(x - \frac{t}{2}\right)} e^{-2\pi it \cdot \omega} dt,
	\qquad x, \omega \in \mathbb{R}^d,
\end{equation*}
and the {\em Rihaczek distribution}
\begin{equation*}
	Rf(x,\omega) = e^{-2\pi ix\cdot\omega} f(x)\hat{f}(\omega).
\end{equation*}
Actually, while the Wigner distribution is naturally connected with the compactness criterion \eqref{1} (see Proposition \ref{prop2} below), the Rihaczek distribution turns out to be a better tool for obtaining a compactness criterion
involving a single condition on a single time-frequency representation (see Theorem \ref{th1} below).

In the second part of the paper we move to the setting of $L^2(\mathbb{R}^+)$, the space
of functions on $(0,+\infty)$ that are square integrable with respect to the Haar measure
$r^{-1}dr$. The role of the Fourier transform and that of the Wigner distribution are played here by the
{\em Mellin transform}
\begin{equation*}
	\mathcal{M}f(\omega) = \int_0^{+\infty} f(r) r^{-2\pi i\omega} \frac{dr}{r},
	\qquad \omega \in \mathbb{R},
\end{equation*}
and the {\em affine Wigner distribution}
\beqsn
W_{\Aff}f (\omega,r):=\int_{-\infty}^{+\infty}
f\left(\frac{rue^u}{e^u-1}\right)\overline{f\left(\frac{ru}{e^u-1}\right)}
e^{-2\pi i\omega u}du,
\eeqsn
as defined in \cite{BBL}. We further introduce the {\em Mellin--Rihaczek
	distribution}
$$
R_{\mathcal{M}}f(\omega,r) = r^{-2\pi i\omega}f(r)\mathcal{M}f(\omega)
$$
and we study compactness criteria and Umbrella Theorems for both representations.

The paper is organized as follows. In Section~\ref{sec2} we give compactness criteria in terms of the Wigner distribution. To this regard, we point out something peculiar in formula \eqref{5}: the integral on the ``edges'' $A_\xi$ (as defined in \eqref{DEQA}) are counted twice. The question whether the second integral over $A_\xi$ is really essential to have a characterization of relative compactness turned out to be much more difficult than expected and we leave it as an open problem. In Section~\ref{sec3} we give the compactness criterion and the Umbrella Theorem in terms of the Rihaczek distribution. In Section~\ref{sec4} we develop the analogous theory in $L^2(\mathbb{R}^+)$,
proving compactness criteria in terms of the Mellin transform, the
affine Wigner distribution and the Mellin--Rihaczek distribution, together with the corresponding Umbrella Theorems.

\section{Shapiro's Umbrella Theorems for the Wigner distribution}
\label{sec2}

In this section we establish Umbrella Theorems in terms of the Wigner distribution. We start by recalling that if $f,\hat f\in L^1(\R^d)\cap L^2(\R^d)$, then the Wigner distribution yields the correct {\em marginal densities} (see \cite[Lemma~4.3.6]{G}):
\beqsn
&&\int_{\R^d}Wf(x,\omega)d\omega=|f(x)|^2,\qquad x\in\R^d,\\
&&\int_{\R^d}Wf(x,\omega)dx=|\hat f(\omega)|^2,\qquad\omega\in\R^d,
\eeqsn
and, in particular,
\beqsn
\int_{\R^{2d}}Wf(x,\omega)dxd\omega=\|f\|^2_{L^2(\R^d)}.
\eeqsn
Therefore
\beqs
\nonumber
&&\int_{|x|>\xi}(|f(x)|^2+|\hat f(x)|^2)dx\\
\label{3}
=&&\int_{|x|>\xi}\left(\int_{\R^d}Wf(x,\omega)d\omega\right)dx
+\int_{|x|>\xi}\left(\int_{\R^d}Wf(\omega,x)d\omega\right)dx\\
\nonumber
=&&\int_{|x|>\xi}(Wf(x,\omega)+Wf(\omega,x))dxd\omega,
\eeqs
so that a bounded set $K$ of $L^2(\R^d)$, such that $f,\hat f\in L^1(\R^d)\cap L^2(\R^d)$ for all $f\in K$, is relatively compact in $L^2(\R^d)$
if and only if
\beqs
\label{2}
\lim_{\xi\to+\infty}\sup_{f\in K}\int_{|x|>\xi}(Wf(x,\omega)+Wf(\omega,x))dxd\omega=0.
\eeqs
Denoting
\begin{equation}
\begin{split}
\label{DEQA}
&D_\xi:=\{(x,\omega)\in\R^{2d}:\, |x|>\xi\},\\
&E_\xi:=\{(x,\omega)\in\R^{2d}:\, |\omega|>\xi\},\\
&Q_\xi:=\{(x,\omega)\in\R^{2d}:\, |x|\leq\xi,|\omega|\leq\xi\},\\
&A_\xi:=\{(x,\omega)\in\R^{2d}:\, |x|>\xi,|\omega|>\xi\},
\end{split}
\end{equation}
from \eqref{3} condition \eqref{2} can also be written as
\beqs
\label{4}
\lim_{\xi\to+\infty}\sup_{f\in K}\left(\int_{D_\xi}Wf(x,\omega)dxd\omega+
\int_{E_\xi}Wf(x,\omega)dxd\omega\right)=0
\eeqs
or, equivalently,
\beqs
\label{5}
\lim_{\xi\to+\infty}\sup_{f\in K}\left(\int_{\complement Q_\xi}Wf(x,\omega)dxd\omega+
\int_{A_\xi}Wf(x,\omega)dxd\omega\right)=0,
\eeqs
where $\complement Q_\xi$ denotes the complement of $Q_\xi$ in $\R^{2d}$. 

Let us also remark that
\beqsn
\left|\int_{\complement Q_\xi}Wf(x,\omega)dxd\omega+
\int_{A_\xi}Wf(x,\omega)dxd\omega\right|
\leq 2\int_{\complement Q_\xi}|Wf(x,\omega)|dxd\omega,
\eeqsn
so that we have the following sufficient condition for relative compactness:
\begin{Prop}
\label{prop1}
Let $K$ be a bounded subset of $L^2(\R^d)$, such that $f,\hat f\in L^1(\R^d)\cap L^2(\R^d)$ for all $f\in K$. Then condition
\beqs
\label{6}
\lim_{\xi\to+\infty}\sup_{f\in K}\int_{\complement Q_\xi}|Wf(x,\omega)|dxd\omega=0
\eeqs
is sufficient in order that $K$  is relatively compact in 
$L^2(\R^d)$.
\end{Prop}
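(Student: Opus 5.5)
The plan is to reduce condition \eqref{6} to the criterion of Theorem~\ref{thRSST21}, using the equivalence between \eqref{1} and \eqref{5} derived just before the statement.

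\textbf{Step 1.} Fix $f\in K$ and $\xi>0$. By the hypotheses $f,\hat f\in L^1(\R^d)\cap L^2(\R^d)$, the marginal identities hold. Hence, by \eqref{3} and the decomposition into $D_\xi$, $E_\xi$, $Q_\xi$, $A_\xi$,
\beqsn
\int_{|x|>\xi}(|f(x)|^2+|\hat f(x)|^2)dx=\int_{\complement Q_\xi}Wf(x,\omega)dxd\omega+\int_{A_\xi}Wf(x,\omega)dxd\omega .
\eeqsn
We need $Wf\in L^1(\R^{2d})$ for the splitting $D_\xi\cup E_\xi=\complement Q_\xi$, $D_\xi\cap E_\xi=A_\xi$ to be legitimate. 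Whenever the right-hand side of \eqref{6} is finite for some $\xi$, $Wf$ is integrable on $\complement Q_\xi$. It is also integrable on $Q_\xi$, since $Wf$ is bounded by $2^d\|f\|_2^2$ and $Q_\xi$ has finite measure. So under \eqref{6} (for $\xi$ large) all the integrals are absolutely convergent, and Fubini justifies both \eqref{3} and the splitting. This justification is the only delicate point. For the relevant large $\xi$, the supremum in \eqref{6} is finite, so every $Wf$ with $f\in K$ is integrable.

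\textbf{Step 2.} The left-hand side is nonnegative, so it equals its absolute value. Since $A_\xi\subset\complement Q_\xi$, we get
\beqsn
0\le\int_{|x|>\xi}(|f(x)|^2+|\hat f(x)|^2)dx\le\int_{\complement Q_\xi}|Wf|\,dxd\omega+\int_{A_\xi}|Wf|\,dxd\omega\le 2\int_{\complement Q_\xi}|Wf(x,\omega)|dxd\omega .
\eeqsn

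\textbf{Step 3.} Take the supremum over $f\in K$ and let $\xi\to+\infty$. Condition \eqref{6} then forces \eqref{1}. Since $K$ is bounded, Theorem~\ref{thRSST21} gives that $K$ is relatively compact in $L^2(\R^d)$.
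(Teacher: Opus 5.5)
Your proof is correct and follows the paper's own route. The marginal identities give $\int_{|x|>\xi}(|f|^2+|\hat f|^2)\,dx=\int_{\complement Q_\xi}Wf+\int_{A_\xi}Wf\le 2\int_{\complement Q_\xi}|Wf|$, and Theorem~\ref{thRSST21} then concludes. Your Step~1 check that \eqref{6} forces $Wf\in L^1(\R^{2d})$ (via $|Wf|\le 2^d\|f\|^2_{L^2(\R^d)}$ on $Q_\xi$) is a detail the paper leaves implicit; it is what justifies Fubini and the splitting over $D_\xi\cup E_\xi$.
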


Let us remark that the limit in \eqref{6} exists since the integral is decreasing in $\xi$.
We shall no longer specify this detail in the next limits.

The above proposition yields the following umbrella theorem:
\begin{Cor}
\label{cor1}
If $\{g_n\}_{n\in\N}$ is an orthonormal sequence in $L^2(\R^d)$ with 
$g_n,\hat g_n\in L^1(\R^d)\cap L^2(\R^d)$ for all $n\in\N$, then
\beqsn
\sup_{n\in\N}|Wg_n(x,\omega)|\not\in L^2(\R^{2d}).
\eeqsn
\end{Cor}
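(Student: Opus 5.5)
We argue by contradiction, but instead of going through Proposition~\ref{prop1}, which needs $L^1$ control of $Wf$ on $\complement Q_\xi$, we go directly back to Theorem~\ref{thRSST21}, since an $L^2$ majorant only controls $|Wg_n|^2$. Suppose $F:=\sup_n|Wg_n|\in L^2(\R^{2d})$. Then by dominated convergence
\beqsn
\sup_{n}\int_{D_\xi\cup E_\xi}|Wg_n(x,\omega)|^2dxd\omega\leq\int_{D_\xi\cup E_\xi}F^2\,dxd\omega\longrightarrow0,\qquad \xi\to+\infty .
\eeqsn
The goal is to turn this into the tail condition \eqref{1} for the bounded set $\{g_n\}$. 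That would make $\{g_n\}$ relatively compact, which is impossible for an infinite orthonormal sequence because $\|g_n-g_m\|=\sqrt2$ for $n\neq m$.

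\emph{Step 1: time tails.} For fixed $x$, the map $\omega\mapsto Wg(x,\omega)$ is the Fourier transform in $t$ of $t\mapsto g(x+t/2)\overline{g(x-t/2)}$. Plancherel and the change of variables $u=x+t/2$, $v=x-t/2$ (Jacobian $2^d$ in one direction) then give
\beqsn
\int_{D_\xi}|Wg|^2dxd\omega=\int_{|x|>\xi}\int_{\R^d}|g(x+t/2)|^2|g(x-t/2)|^2dt\,dx
=c_d\int_{|(u+v)/2|>\xi}|g(u)|^2|g(v)|^2dudv ,
\eeqs
with $c_d>0$ depending only on $d$. For $k=1,\dots,d$ and $s=\pm1$, let $H_{k,s}:=\{u\in\R^d: s u_k>\xi\}$. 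If $u,v\in H_{k,s}$, then $|(u+v)/2|\geq s(u_k+v_k)/2>\xi$. Hence
\beqsn
\Bigl(\int_{H_{k,s}}|g|^2\Bigr)^2\leq c_d^{-1}\int_{D_\xi}|Wg|^2 .
\eeqsn
Moreover $|u|>\sqrt d\,\xi$ forces $|u_k|>\xi$ for some $k$, so $\{|u|>\sqrt d\,\xi\}\subset\bigcup_{k,s}H_{k,s}$. Therefore
\beqsn
\int_{|u|>\sqrt d\,\xi}|g_n(u)|^2du\leq 2d\,c_d^{-1/2}\Bigl(\int_{D_\xi}F^2\Bigr)^{1/2},
\eeqsn
and the right-hand side tends to $0$ uniformly in $n$.

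\emph{Step 2: frequency tails.} We use the covariance $W\hat g(x,\omega)=Wg(-\omega,x)$, a standard identity that is checked directly from the definition. It gives $\int_{D_\xi}|W\hat g_n|^2=\int_{E_\xi}|Wg_n|^2\leq\int_{E_\xi}F^2$. Applying Step~1 to $\hat g_n$ then yields $\sup_n\int_{|\omega|>\sqrt d\,\xi}|\hat g_n|^2\to0$. Steps 1 and 2 together are exactly condition \eqref{1}, and Theorem~\ref{thRSST21} gives the contradiction described above.

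The main obstacle is Step~1. An $L^2$ bound on $Wg$ is quartic in $g$ and does not directly control the quadratic marginals $|g|^2$. The half-space trick (both points in the same half-space force the midpoint far away) is what recovers a bound on the square of the tail mass. Orthonormality of the $Wg_n$ in $L^2(\R^{2d})$, which follows from Moyal's formula, does not help on its own, since orthonormal sequences with an $L^2$ majorant on a bounded set exist. Note also that the hypothesis $g_n,\hat g_n\in L^1$ is not needed along this route.
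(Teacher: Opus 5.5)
Your proof is correct, and it takes a genuinely different route from the paper's. The paper argues through Proposition~\ref{prop1}: since $\{g_n\}$ is not relatively compact, condition \eqref{6} must fail, and an $L^2$ majorant $\psi$ gives $\sup_n\int_{\complement Q_\xi}|Wg_n|^2\le\int_{\complement Q_\xi}|\psi|^2\to0$.

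As you anticipated, this does not literally contradict the failure of \eqref{6}, which concerns $\int_{\complement Q_\xi}|Wg_n|$ without the square. On the unbounded set $\complement Q_\xi$, an $L^2$ majorant gives no uniform $L^1$ tail control. So the paper's chain of implications really closes only for an $L^1$ majorant, and it also needs $g_n,\hat g_n\in L^1$ for the marginal identities behind Proposition~\ref{prop1}.

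You instead return to Theorem~\ref{thRSST21} and extract the time and frequency tails directly from the squared Wigner tails. The tools are:
\begin{itemize}
\item the exact identity $\int_{D_\xi}|Wg|^2=\int_{|u+v|>2\xi}|g(u)|^2|g(v)|^2\,du\,dv$;
\item the half-space bound $(\int_{H_{k,s}}|g|^2)^2\le\int_{D_\xi}|Wg|^2$;
\item the covariance $W\hat g(x,\omega)=Wg(-\omega,x)$, which handles $E_\xi$.
\end{itemize}
This proves the corollary exactly as stated and removes the $L^1\cap L^2$ hypothesis.

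It also gives more. For bounded $K$, the first condition of Proposition~\ref{prop2}$(iii)$ alone already characterizes relative compactness. Necessity is immediate, since $|u+v|>2\xi$ forces $|u|>\xi$ or $|v|>\xi$. Hence the condition on $\widehat{Wf}$ there is redundant, which is sharper than the paper's two-condition criterion for $W$.

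One harmless slip: the map $(x,t)\mapsto(x+t/2,\,x-t/2)$ has Jacobian determinant of absolute value $1$, so in fact $c_d=1$.
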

\begin{proof}
By the orthogonality, the sequence $\{g_n\}_{n\in\N}$ cannot have any convergent subsequence so that the set $K=\{g_n\}_{n\in\N}$ is not relatively compact and the sufficient condition
\eqref{6} cannot be satisfied. 
If by contradiction
\beqsn
\sup_{n\in\N}|Wg_n(x,\omega)|\leq\psi(x,\omega)\in L^2(\R^{2d}),
\eeqsn
then
\beqsn
\int_{\complement Q_\xi}|Wg_n(x,\omega)|^2dxd\omega\leq \int_{\complement Q_\xi}|\psi(x,\omega)|^2dxd\omega,\qquad\forall n\in\N,
\eeqsn
and hence
\beqsn
\sup_{n\in\N}\int_{\complement Q_\xi}|Wg_n(x,\omega)|^2dxd\omega\leq \int_{\complement Q_\xi}|\psi(x,\omega)|^2dxd\omega\longrightarrow 0\qquad \mbox{as}\ \xi\to+\infty.
\eeqsn
\end{proof}

In order to avoid the assumption  that $f,\hat f\in L^1(\R^d)\cap L^2(\R^d)$ for all $f\in K$,
 let us start by the following:
\begin{Lemma}
\label{lemma1}
Let $K\subseteq L^2(\R^d)$ and
\beqsn
K_2:=\{f\otimes\bar f:\ f\in K\}\subseteq L^2(\R^{2d}).
\eeqsn
Then: $K$ is relatively compact in $L^2(\R^d)$ if and only if $K_2$ is relatively compact in
$L^2(\R^{2d})$.
\end{Lemma}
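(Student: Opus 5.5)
The plan is to work with the map $T:L^2(\R^d)\to L^2(\R^{2d})$, $T(f)=f\otimes\bar f$, and prove the two implications separately. The key identity is the one for tensor products:
\[
\|f\otimes\bar f-g\otimes\bar g\|_{L^2(\R^{2d})}\le \|f-g\|\,\|f\|+\|g\|\,\|f-g\|,
\]
which follows from $f\otimes\bar f-g\otimes\bar g=(f-g)\otimes\bar f+g\otimes\overline{(f-g)}$ together with $\|u\otimes \bar v\|_{L^2(\R^{2d})}=\|u\|\,\|v\|$ (Fubini). Hence $T$ is continuous, indeed locally Lipschitz.

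\emph{($\Rightarrow$)} If $K$ is relatively compact, then $\overline K$ is compact. Its image $T(\overline K)$ is therefore compact, since $T$ is continuous. It contains $K_2$, so $K_2$ is relatively compact.

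\emph{($\Leftarrow$)} This direction is the main obstacle, because $T$ identifies $f$ with $e^{i\theta}f$ and so is not injective. I would use sequential compactness. Take a sequence $\{f_n\}\subseteq K$. Relative compactness of $K_2$ gives a subsequence, still denoted $f_n$, with $f_n\otimes\bar f_n\to F$ in $L^2(\R^{2d})$. In particular the norms converge, $\|f_n\|^2=\|f_n\otimes\bar f_n\|\to\|F\|$.

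\emph{Case $F=0$.} Then $f_n\to0$ and we are done.

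\emph{Case $F\neq0$.} Here $F$ lies in the closure of the rank-one operators, viewing kernels as Hilbert--Schmidt operators. It is positive and of rank one, with a single nonzero eigenvalue, so $F=h\otimes\bar h$ for some $h\neq0$ with $\|h\|^2=\|F\|$. It remains to fix the phases. Define the unimodular numbers
\[
c_n=\frac{\langle f_n,h\rangle}{|\langle f_n,h\rangle|}.
\]
These are well defined for large $n$, because
\[
|\langle f_n,h\rangle|^2=\langle (f_n\otimes\bar f_n)h,h\rangle\to\|h\|^4>0.
\]
Then expand
\[
\|\bar c_nf_n-h\|^2=\|f_n\|^2+\|h\|^2-2|\langle f_n,h\rangle|\;\longrightarrow\;\|h\|^2+\|h\|^2-2\|h\|^2=0.
\]
Passing to a further subsequence with $c_n\to c\in S^1$, we obtain $f_n\to ch$ in $L^2(\R^d)$. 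So every sequence in $K$ has a convergent subsequence, and $K$ is relatively compact.

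I expect the phase-fixing step, and the identification of the limit $F$ as $h\otimes\bar h$, to be the only delicate points. Everything else is the continuity estimate above.
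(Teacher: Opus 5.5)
Your proof is correct, and it takes a genuinely different route from the paper's.

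\emph{The paper's route.} It never treats $f\mapsto f\otimes\bar f$ abstractly. It reduces both sides to the tail criterion of Theorem~\ref{thRSST21}, using:
\begin{enumerate}
\item the two-sided estimate \eqref{9}, $\|f\|^2\int_{|x|>\xi}|f|^2\le\int_{\complement Q_\xi}|f\otimes\bar f|^2\le2\|f\|^2\int_{|x|>\xi}|f|^2$;
\item its analogue for $\F(f\otimes\bar f)(\omega,\eta)=\hat f(\omega)\overline{\hat f(-\eta)}$;
\item the equivalence of $\max\{|x|,|y|\}$ with the Euclidean norm on $\R^{2d}$.
\end{enumerate}

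\emph{Your route.} You use only Hilbert space structure. For ($\Rightarrow$) you use local Lipschitz continuity of $T$. For ($\Leftarrow$) you identify the Hilbert--Schmidt limit of the positive rank-one operators $f_n\otimes\bar f_n$ as some $h\otimes\bar h$, and then fix the phases.

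\emph{What each buys.} Your argument is softer but more general: it works verbatim in any Hilbert space, needs no Fourier analysis, and isolates the one real issue, the $S^1$ ambiguity of $T$. It also handles for free two points the paper leaves tacit:
\begin{enumerate}
\item Theorem~\ref{thRSST21} requires $K$ bounded; this is automatic here since $\|f\otimes\bar f\|=\|f\|^2$.
\item Uniform smallness of $\|f\|^2\int_{|x|>\xi}|f|^2$ must be shown to imply that of $\int_{|x|>\xi}|f|^2$; this holds because the square of the latter is bounded by the former.
\end{enumerate}
The paper's route, in exchange, gives explicit quantitative tail bounds on the squares $Q_\xi$. That is exactly the mechanism reused for the Rihaczek criterion in Theorem~\ref{th1}.

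\emph{One step to spell out.} You should state that the set of operators of rank at most one is closed in operator norm: if $Fu_1,Fu_2$ were linearly independent, this would persist for $f_n\otimes\bar f_n$ with $n$ large. Alternatively, you can avoid the spectral identification altogether. Pick $u$ with $Fu\neq0$ and write $f_n=(f_n\otimes\bar f_n)u/\langle u,f_n\rangle$. Here $|\langle u,f_n\rangle|\,\|f_n\|\to\|Fu\|>0$, so along a subsequence $\langle u,f_n\rangle\to a\neq0$, and hence $f_n\to Fu/a$.
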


\begin{proof}
Let us first remark that condition \eqref{1} in Theorem~\ref{thRSST21} is equivalent to
\beqs
\label{7}
\begin{cases}
\ds\lim_{\xi\to+\infty}\sup_{f\in K}\int_{|x|>\xi}|f(x)|^2dx=0,\cr
\ds\lim_{\xi\to+\infty}\sup_{f\in K}\int_{|\omega|>\xi}|\hat f(\omega)|^2dx=0.
\end{cases}
\eeqs
Moreover,
\beqsn
&&\frac12\left(\int_{D_\xi}|f(x)\otimes\overline{f(y)}|^2dxdy+
\int_{E_\xi}|f(x)\otimes\overline{f(y)}|^2dxdy\right)\\
\leq&&\int_{\complement Q_\xi}|f(x)\otimes\overline{f(y)}|^2dxdy\\
\leq&&\int_{D_\xi}|f(x)\otimes\overline{f(y)}|^2dxdy+
\int_{E_\xi}|f(x)\otimes\overline{f(y)}|^2dxdy
\eeqsn
i.e.
\beqs
\label{9}
\|f\|^2_{L^2(\R^d)}\int_{|x|>\xi}|f(x)|^2dx\leq 
\int_{\complement Q_\xi}|f(x)\otimes\overline{f(y)}|^2dxdy
\leq 2\|f\|^2_{L^2(\R^d)}\int_{|x|>\xi}|f(x)|^2dx,
\eeqs
since
\beqsn
\int_{D_\xi}|f(x)\otimes\overline{f(y)}|^2dxdy
=\int_{|x|>\xi}|f(x)|^2dx\cdot \int_{\R^d}|f(y)|^2dy
=\|f\|^2_{L^2(\R^d)}\int_{|x|>\xi}|f(x)|^2dx
\eeqsn
and similarly
\beqsn
\int_{E_\xi}|f(x)\otimes\overline{f(y)}|^2dxdy
=\|f\|^2_{L^2(\R^d)}\int_{|y|>\xi}|f(y)|^2dy.
\eeqsn
Arguing analogously for $\hat f(\omega)\otimes\overline{\hat f(\eta)}$, and taking into account that
\beqsn
&&\int_{\complement Q_\xi}|\F(f(x)\otimes \overline{f(y)})(\omega,\eta)|^2d\omega d\eta
=\int_{\complement Q_\xi}|\hat f(\omega)\otimes \hat{\bar{f}}(\eta)|^2d\omega d\eta\\
=&&
\int_{\complement Q_\xi}|\hat f(\omega)\otimes \bar{\hat{f}}(-\eta)|^2d\omega d\eta
=\int_{\complement Q_\xi}|\hat f(\omega)\otimes \bar{\hat{f}}(\eta)|^2d\omega d\eta,
\eeqsn
we have that
\beqsn
\|f\|^2_{L^2(\R^d)}\int_{|\omega|>\xi}|\hat f(\omega)|^2d\omega\leq 
\int_{\complement Q_\xi}|\F(f(x)\otimes\overline{f(y)})(\omega,\eta)|^2d\omega d\eta
\leq 2\|f\|^2_{L^2(\R^d)}\int_{|\omega|>\xi}|\hat f(\omega)|^2d\omega,
\eeqsn
since $\|\hat f\|^2_{L^2(\R^d)}=\|f\|^2_{L^2(\R^d)}$ by Plancherel's theorem.

Therefore $f,\hat f$ satisfy conditions \eqref{7} if and only if $f\otimes\bar f$ and
$\F(f\otimes\bar f)$ satisfy
\beqsn
\begin{cases}
\ds\lim_{\xi\to+\infty}\sup_{f\in K}\int_{\complement Q_\xi}|f(x)\otimes\bar f(y)|^2dxdy=0,\cr
\ds\lim_{\xi\to+\infty}\sup_{f\in K}\int_{\complement Q_\xi}|\F(f(x)\otimes\overline{f(y)})(\omega,\eta)|^2d\omega d\eta=0.
\end{cases}
\eeqsn
This condition is the analogous of \eqref{7} (and hence \eqref{1}) for $f\otimes\bar f$ in $\R^{2d}$
since $\complement Q_\xi=\{(x,y)\in\R^{2d}:\,\max\{|x|,|y|\}>\xi\}$ and the norm
$\max\{|x|,|y|\}$ is equivalent to the Euclidean norm.

The thesis thus follows from Theorem~\ref{thRSST21}.
\end{proof}

Denoting by $\mathcal T_s$ the {\em symmetric coordinate change} defined by
\beqsn
\mathcal T_s F(x,t)=F\left(x+\frac t2,x-\frac t2\right)
\eeqsn
and by $\F_2$ the {\em partial Fourier transform with respect to the second variable}, 
the Wigner distribution can be written as
\beqs
\label{17}
Wf=\F_2\mathcal T_s(f\otimes\bar f).
\eeqs
Note that both $\mathcal T_s$ and $\F_2$ are isometries in $L^2(\R^{2d})$, so that
$\{f\otimes \bar f: f\in K\}$ is relatively compact in $L^2(\R^{2d})$ if and only if
$\{\mathcal T_s(f\otimes \bar f): f\in K\}$ is relatively compact in $L^2(\R^{2d})$ if and only if
$\{\F_2\mathcal T_s(f\otimes \bar f)=Wf: f\in K\}$ is relatively compact in $L^2(\R^{2d})$.
From Lemma~\ref{lemma1} we can thus write the necessary and sufficient condition \eqref{1}
(or \eqref{7}) in Theorem~\ref{thRSST21} for a bounded set $K\subseteq L^2(\R^d)$ to be relatively compact in 
$L^2(\R^d)$, as:
\beqs
\label{11}
\begin{cases}
\ds \lim_{\xi\to+\infty}\sup_{f\in K}\int_{\complement Q_\xi}|Wf(x,\omega)|^2dxd\omega=0,\cr
\ds \lim_{\xi\to+\infty}\sup_{f\in K}\int_{\complement Q_\xi}|\widehat{Wf}(y,\eta)|^2dyd\eta=0.
\end{cases}
\eeqs

We can also write the second condition of \eqref{11} in terms of the {\em Ambiguity function}
\beqsn 
Af(x,\omega)=\int_{\R^d}f\left(t+\frac x2\right)\overline{f\left(t-\frac x2\right)}
e^{-2\pi i t\cdot\omega}dt,
\eeqsn
since $Wf=\F\mathcal U Af$, for $\mathcal U F(x,\omega):=F(\omega, -x)$, by 
\cite[Lemma~4.3.4]{G}:
\beqsn
&&\int_{\complement Q_\xi}|\widehat{Wf}(y,\eta)|^2dyd\eta
=\int_{\complement Q_\xi}|\F^{-1}Wf(-y,-\eta)|^2dyd\eta\\
=&&\int_{\complement Q_\xi}|\F^{-1}Wf(y,\eta)|^2dyd\eta
=\int_{\complement Q_\xi}|\mathcal U Af(y,\eta)|^2dyd\eta\\
=&&\int_{\complement Q_\xi}| Af(\eta,-y)|^2dyd\eta
=\int_{\complement Q_\xi}| Af(x,\omega)|^2dxd\omega.
\eeqsn

We can finally collect the above compactness criteria in the following:
\begin{Prop}
\label{prop2}
Let $K$ be a bounded set of $L^2(\R^d)$. Then $K$ is relatively compact in $L^2(\R^d)$ if and only if one of the following equivalent conditions is satisfied:
\begin{enumerate}[$(i)$]
\item
$\ds \lim_{\xi\to+\infty}\sup_{f\in K}\int_{|x|>\xi}(|f(x)|^2+|\hat f(x)|^2)dx=0$;
\item
$\begin{cases}
\ds\lim_{\xi\to+\infty}\sup_{f\in K}\int_{|x|>\xi}|f(x)|^2dx=0,\cr
\ds\lim_{\xi\to+\infty}\sup_{f\in K}\int_{|\omega|>\xi}|\hat f(\omega)|^2dx=0;
\end{cases}$
\item
$\begin{cases}
\ds \lim_{\xi\to+\infty}\sup_{f\in K}\int_{\complement Q_\xi}|Wf(x,\omega)|^2dxd\omega=0,\cr
\ds \lim_{\xi\to+\infty}\sup_{f\in K}\int_{\complement Q_\xi}|\widehat{Wf}(x,\omega)|^2dxd\omega=0;
\end{cases}$
\item
$\ds\lim_{\xi\to+\infty}\sup_{f\in K}\int_{\complement Q_\xi}\left(|Wf(x,\omega)|^2
+|\widehat{Wf}(x,\omega)|^2\right)dxd\omega=0$;
\item
$\begin{cases}
\ds \lim_{\xi\to+\infty}\sup_{f\in K}\int_{\complement Q_\xi}|Wf(x,\omega)|^2dxd\omega=0,\cr
\ds \lim_{\xi\to+\infty}\sup_{f\in K}\int_{\complement Q_\xi}|{Af}(x,\omega)|^2dxd\omega=0;
\end{cases}$
\item
$\ds\lim_{\xi\to+\infty}\sup_{f\in K}\int_{\complement Q_\xi}\left(|Wf(x,\omega)|^2
+|{Af}(x,\omega)|^2\right)dxd\omega=0$.
\end{enumerate}
\end{Prop}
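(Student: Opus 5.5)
The proof will be a chain of equivalences, most of which have essentially been established in the discussion preceding the statement. Condition $(i)$ characterizes relative compactness of the bounded set $K$ by Theorem~\ref{thRSST21}. The equivalence $(i)\Leftrightarrow(ii)$ is elementary. The integrand in $(i)$ is the sum of the two nonnegative integrands appearing in $(ii)$. Hence each term of $(ii)$ is bounded by the expression in $(i)$. Conversely, the supremum of the sum is at most the sum of the suprema, so $(ii)$ implies $(i)$.

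For $(ii)\Leftrightarrow(iii)$ I will use Lemma~\ref{lemma1} together with the factorization \eqref{17}.
\begin{itemize}
\item By Lemma~\ref{lemma1}, $K$ is relatively compact in $L^2(\R^d)$ if and only if $K_2=\{f\otimes\bar f: f\in K\}$ is relatively compact in $L^2(\R^{2d})$.
\item Since $\mathcal T_s$ and $\F_2$ are isometries (indeed unitary operators) of $L^2(\R^{2d})$, this holds if and only if $\{Wf: f\in K\}$ is relatively compact in $L^2(\R^{2d})$.
\item This set is bounded, because $\|Wf\|_{L^2}=\|f\|_{L^2}^2$ (Moyal) and $K$ is bounded.
\item Applying Theorem~\ref{thRSST21} in dimension $2d$, in its split form $(ii)$, relative compactness of $\{Wf\}$ is equivalent to tail conditions on $Wf$ and $\widehat{Wf}$ outside Euclidean balls of $\R^{2d}$.
\end{itemize}
These tail regions are not the sets $\complement Q_\xi$, so I must replace them. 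The balls can be swapped for the sets $\complement Q_\xi$ using
\[
\{|(x,\omega)|>\sqrt2\,\xi\}\subseteq\complement Q_\xi\subseteq\{|(x,\omega)|>\xi\},
\]
which holds because $\max\{|x|,|\omega|\}$ and the Euclidean norm on $\R^{2d}$ are equivalent. This yields exactly $(iii)$.

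The equivalence $(iii)\Leftrightarrow(iv)$ follows by the same sum-versus-separate argument as $(i)\Leftrightarrow(ii)$, since both integrands are nonnegative. For $(v)$ and $(vi)$, the computation preceding the statement, based on $Wf=\F\,\mathcal U Af$ and the invariance of $\complement Q_\xi$ under $(y,\eta)\mapsto(-y,-\eta)$ and $(y,\eta)\mapsto(\eta,-y)$, gives
\[
\int_{\complement Q_\xi}|\widehat{Wf}|^2=\int_{\complement Q_\xi}|Af|^2
\]
for every $f$ and every $\xi$. Substituting this identity into $(iii)$ and $(iv)$ gives $(v)$ and $(vi)$ respectively.

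The only delicate point I expect is rigor in the isometry step. One has to confirm that relative compactness is preserved under unitary maps, which is immediate since they are homeomorphisms. One also has to confirm that \eqref{17} holds for all $f\in L^2(\R^d)$ and not only for nice $f$. This follows by density, because $\mathcal T_s$ and $\F_2$ extend to unitary operators on $L^2(\R^{2d})$. With this, no $L^1$ assumption on $f,\hat f$ is needed, in contrast with Proposition~\ref{prop1}.
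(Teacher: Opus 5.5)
Your proposal is correct and follows the paper's own argument. Theorem~\ref{thRSST21} gives $(i)$, with $(i)\Leftrightarrow(ii)$ elementary. Condition $(iii)$ comes from Lemma~\ref{lemma1}, the unitary factorization $Wf=\F_2\mathcal T_s(f\otimes\bar f)$, and Theorem~\ref{thRSST21} applied in $\R^{2d}$ using the equivalence of the max and Euclidean norms. Condition $(v)$ comes from the identity $\int_{\complement Q_\xi}|\widehat{Wf}|^2=\int_{\complement Q_\xi}|Af|^2$, and $(iv)$, $(vi)$ follow trivially. Your extra checks spell out details the paper leaves implicit: boundedness of $\{Wf\}$ via Moyal's identity, the explicit inclusions $\{|(x,\omega)|>\sqrt2\,\xi\}\subseteq\complement Q_\xi\subseteq\{|(x,\omega)|>\xi\}$, and the validity of \eqref{17} on all of $L^2$.
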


Let us remark that similar results can be obtained also for other time-frequency representations
yielding the marginal densities, because of conditions (i)/(ii) in the above proposition.
Our aim, in the next section, is to obtain a compactness criterion involving only one
condition on only one
time-frequency representation.

\section{Shapiro's Umbrella Theorems for the Rihaczek distribution}
\label{sec3}
Let us recall the {\em Rihaczek distribution} defined, for $f\in L^2(\R^d)$, by
\beqsn
Rf(x,\omega):=e^{-2\pi i x\cdot\omega}f(x)\overline{\hat f(\omega)},\qquad x,\omega\in\R^d.
\eeqsn

\begin{Rem}
\label{remRW}
Since
\beqsn
Rf=\phi*Wf \qquad\text{with}\ \phi(x,\omega)=2^d e^{-4\pi i x\cdot\omega}
\eeqsn
and $\hat{\phi}(y,\eta)=e^{\pi i y\cdot\eta}$ (see \cite[\S 9.2]{C}), we have that
\beqsn
\widehat{Rf}(y,\eta)=\F(\phi*Wf)(y,\eta)=
\hat{\phi}(y,\eta)\cdot\widehat{Wf}(y,\eta)=e^{\pi i y\cdot\eta}\widehat{Wf}(y,\eta).
\eeqsn
In particular, $|\widehat{Rf}|=|\widehat{Wf}|$ and conditions $(iii)$ and $(iv)$ of 
Proposition~\ref{prop2} may be replaced by
\beqsn
(vii)\qquad&&
\begin{cases}
\ds \lim_{\xi\to+\infty}\sup_{f\in K}\int_{\complement Q_\xi}|Wf(x,\omega)|^2dxd\omega=0,\cr
\ds \lim_{\xi\to+\infty}\sup_{f\in K}\int_{\complement Q_\xi}|\widehat{Rf}(x,\omega)|^2dxd\omega=0
\end{cases}\\
(viii)\qquad&&
\ds\lim_{\xi\to+\infty}\sup_{f\in K}\int_{\complement Q_\xi}\left(|Wf(x,\omega)|^2
+|\widehat{Rf}(x,\omega)|^2\right)dxd\omega=0.
\eeqsn
\end{Rem}

Using the same notation as in the previous section, let us now prove the following:
\begin{Th}
\label{th1}
A bounded set $K\subseteq L^2(\R^d)$ is relatively compact in $L^2(\R^d)$ if and only if
\beqs
\label{14}
\lim_{\xi\to+\infty}\sup_{f\in K}\int_{\complement Q_\xi}|Rf(x,\omega)|^2dxd\omega=0.
\eeqs
\end{Th}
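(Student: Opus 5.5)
The plan is to exploit the product structure $|Rf(x,\omega)|^2=|f(x)|^2\,|\hat f(\omega)|^2$. This reduces the integral in \eqref{14} to one-dimensional tail quantities of $f$ and $\hat f$, exactly as in the proof of Lemma~\ref{lemma1}. We then compare with condition $(ii)$ of Proposition~\ref{prop2}, equivalently with Theorem~\ref{thRSST21}.

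First we write $\complement Q_\xi=D_\xi\cup E_\xi$. Then
\[
\tfrac12\Big(\int_{D_\xi}|Rf|^2+\int_{E_\xi}|Rf|^2\Big)\le\int_{\complement Q_\xi}|Rf|^2\le\int_{D_\xi}|Rf|^2+\int_{E_\xi}|Rf|^2 .
\]
By Fubini and Plancherel ($\|\hat f\|=\|f\|$) we have
\[
\int_{D_\xi}|Rf|^2=\|f\|^2\int_{|x|>\xi}|f(x)|^2dx,\qquad \int_{E_\xi}|Rf|^2=\|f\|^2\int_{|\omega|>\xi}|\hat f(\omega)|^2d\omega .
\]
Hence
\[
\tfrac12\|f\|^2\int_{|x|>\xi}\big(|f(x)|^2+|\hat f(x)|^2\big)dx\le\int_{\complement Q_\xi}|Rf|^2\le \|f\|^2\int_{|x|>\xi}\big(|f(x)|^2+|\hat f(x)|^2\big)dx .
\]

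Sufficiency is the easy half of the right-hand inequality. Since $K$ is bounded, $\|f\|^2\le C$, and if $K$ is relatively compact, condition $(i)$ of Proposition~\ref{prop2} gives \eqref{14} immediately.

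Necessity, i.e.\ \eqref{14} $\Rightarrow$ relative compactness, is where the main obstacle lies. The left inequality carries the factor $\|f\|^2$, which may be small for some elements of $K$, so we cannot directly divide by it uniformly. I would handle this by splitting $K$. Fix $\varepsilon>0$.
\begin{enumerate}[$(a)$]
\item For $f\in K$ with $\|f\|^2\le\varepsilon$, the tail $\int_{|x|>\xi}(|f|^2+|\hat f|^2)\le 2\|f\|^2\le 2\varepsilon$ holds trivially, for every $\xi$.
\item For $f\in K$ with $\|f\|^2>\varepsilon$, the left inequality gives $\int_{|x|>\xi}(|f|^2+|\hat f|^2)\le \frac{2}{\varepsilon}\sup_{g\in K}\int_{\complement Q_\xi}|Rg|^2$. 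By \eqref{14} this is $<\varepsilon$ for $\xi$ large.
\end{enumerate}
Thus $\limsup_{\xi\to+\infty}\sup_{f\in K}\int_{|x|>\xi}(|f|^2+|\hat f|^2)\le 2\varepsilon$ for every $\varepsilon>0$. Condition $(i)$ of Proposition~\ref{prop2} therefore holds, and Theorem~\ref{thRSST21} gives relative compactness.

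No integrability assumptions on $f,\hat f$ are needed, since only $L^2$ norms enter.
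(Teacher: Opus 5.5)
Your proof is correct and follows the paper's argument. The same Fubini--Plancherel identities for $\int_{D_\xi}|Rf|^2$ and $\int_{E_\xi}|Rf|^2$ give the two-sided bound, and splitting $K$ according to whether $\|f\|^2\le\varepsilon$ is the direct form of the paper's contradiction argument (a tail larger than $\varepsilon$ forces $\|f\|^2>\varepsilon/2$). Note only that your labels are swapped relative to the paper: ``compact $\Rightarrow$ \eqref{14}'' is the necessity of \eqref{14}, and ``\eqref{14} $\Rightarrow$ compact'' is its sufficiency.
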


\begin{proof}
Let us first remark that
\beqs
\nonumber
&&\int_{D_\xi}|Rf(x,\omega)|^2dxd\omega=\int_{D_\xi}
|f(x)|^2\cdot|\hat f(\omega)|^2dxd\omega\\
\nonumber
=&&\int_{|x|>\xi}\left(\int_{\R^d}|f(x)|^2\cdot|\hat f(\omega)|^2d\omega\right)dx
=\|\hat f\|^2_{L^2(\R^d)}\int_{|x|>\xi}|f(x)|^2dx\\
\label{12}
=&&\| f\|^2_{L^2(\R^d)}\int_{|x|>\xi}|f(x)|^2dx
\eeqs
by Plancherel's theorem, and similarly
\beqs
\nonumber
\int_{E_\xi}|Rf(x,\omega)|^2dxd\omega
=&&
\int_{|\omega|>\xi}\left(\int_{\R^d}|f(x)|^2\cdot|\hat f(\omega)|^2dx\right)d\omega\\
\label{13}
=&&\| f\|^2_{L^2(\R^d)}\int_{|\omega|>\xi}|\hat f(\omega)|^2d\omega.
\eeqs

It follows that if $K\subseteq L^2(\R^d)$ is bounded, then
\beqsn
&&\sup_{f\in K}\int_{\complement Q_\xi}|Rf(x,\omega)|^2dxd\omega\\
\leq&& \sup_{f\in K}\left(\int_{D_\xi}|Rf(x,\omega)|^2dxd\omega+\int_{E_\xi}|Rf(x,\omega)|^2dxd\omega\right)\\
=&&\sup_{f\in K}\| f\|^2_{L^2(\R^d)}\int_{|x|>\xi}(|f(x)|^2+|\hat f(x)|^2)dx\\
\leq&& C\sup_{f\in K}\int_{|x|>\xi}(|f(x)|^2+|\hat f(x)|^2)dx,
\eeqsn
for some $C>0$, and hence condition \eqref{14} is necessary in order that $K$ is
relatively compact in $L^2(\R^d)$ by Theorem~\ref{thRSST21}.

In order to prove the sufficiency let us first remark that, using again \eqref{12} and \eqref{13}:
\beqsn
&&\sup_{f\in K}\int_{\complement Q_\xi}|Rf(x,\omega)|^2dxd\omega\\
\geq&&\sup_{f\in K}\frac12\left(\int_{D_\xi}|Rf(x,\omega)|^2dxd\omega+\int_{E_\xi}|Rf(x,\omega)|^2dxd\omega\right)\\
=&&\frac12\sup_{f\in K}\| f\|^2_{L^2(\R^d)}\int_{|x|>\xi}(|f(x)|^2+|\hat f(x)|^2)dx.
\eeqsn
Therefore condition \eqref{14} implies that
\beqs
\label{15}
\lim_{\xi\to+\infty}\sup_{f\in K}\| f\|^2_{L^2(\R^d)}\int_{|x|>\xi}(|f(x)|^2+|\hat f(x)|^2)dx=0.
\eeqs

Let us prove that \eqref{15} implies \eqref{1} arguing by contradiction: assume that
\beqsn
\exists\varepsilon>0:\,\forall M>0\,\exists\xi>M\,\mbox{s.t.}\ 
\sup_{f\in K}\int_{|x|>\xi}(|f(x)|^2+|\hat f(x)|^2)dx>\varepsilon.
\eeqsn
Then there exists $f^*\in K$ such that
\beqsn
\int_{|x|>\xi}(|f^*(x)|^2+|\hat{{f^*}}(x)|^2)dx>\varepsilon
\eeqsn
and also
\beqsn
2\|f^*\|^2_{L^2(\R^d)}\geq \int_{|x|>\xi}(|f^*(x)|^2+|\hat{{f^*}}(x)|^2)dx>\varepsilon.
\eeqsn
The above inequalities would imply that
\beqsn
&&\sup_{f\in K}\| f\|^2_{L^2(\R^d)}\int_{|x|>\xi}(|f(x)|^2+|\hat f(x)|^2)dx\\
\geq&& \|f^*\|^2_{L^2(\R^d)}\int_{|x|>\xi}(|f^*(x)|^2+|\hat{{f^*}}(x)|^2)dx
>\frac{\varepsilon}{2}\cdot\varepsilon=\frac{\varepsilon^2}{2}>0,
\eeqsn
contradicting \eqref{15}.

Therefore condition \eqref{1} holds and $K$ is relatively compact by Theorem~\ref{thRSST21}.
\end{proof}

Let us now make some remarks that clarify why for the Wigner distribution we need both
$|Wf|^2$ and $|\widehat{Wf}|^2$ in Proposition~\ref{prop2}$(iii)$, while for the
Rihaczek distribution only $|Rf|^2$ is required in Theorem~\ref{th1}.
From \eqref{17} we can write, by the change of variables $x=x'/2$,
\beqsn
&&\F^{-1}Wf(\eta,t)=\F_1^{-1}\mathcal T_s(f\otimes\bar f)(\eta,t)
=\int_{\R^d}f\left(x+\frac t2\right)\overline{f\left(x-\frac t2\right)}e^{2\pi ix\cdot\eta} dx\\
=&&2^{-d}\int_{\R^d}f\left(\frac t2+\frac{x'}{2}\right)\overline{\tilde{f}\left(\frac t2-
\frac{x'}{2}\right)}e^{2\pi i x'\cdot\frac \eta2}dx' 
=2^{-d}W(f,\tilde f)\left(\frac t2,-\frac\eta2\right),
\eeqsn
where $\tilde f(x):=f(-x)$ and $W(f,\tilde f)$ is the {\em cross-Wigner distribution} defined, for
$f,g\in L^2(\R^d)$, by
\beqsn
W(f,g)(x,\omega):=\int_{\R^d}f\left(x+\frac t2\right)\overline{g\left(x-\frac t2\right)}e^{-2\pi i x\cdot\omega}dt.
\eeqsn
Therefore
\beqsn
&&\int_{\complement Q_\xi}|\widehat{Wf}(\eta,t)|^2d\eta dt
=\int_{\complement Q_\xi}|\F^{-1}Wf(-\eta,-t)|^2d\eta dt\\
=&&4^{-d}\int_{\complement Q_\xi}\left|W(f,\tilde f)\left(-\frac t2,\frac\eta2\right)\right|^2d\eta dt
=\int_{\complement Q_{2\xi}}|W(f,\tilde f)(t',\eta')|^2dt'd\eta',
\eeqsn
so that the second condition of Proposition~\ref{prop2}$(iii)$ is equivalent to
\beqs
\label{18}
\lim_{\xi\to+\infty}\sup_{f\in K}\int_{\complement Q_\xi}|W(f,\tilde f)(t,\eta)|^2dtd\eta=0.
\eeqs
We claim the the analogous condition on $|R(f,\tilde f)|^2$ for the
{\em cross-Rihaczek distribution}
\beqsn
R(f,g)(x,\omega):=e^{-2\pi i x\cdot\omega}f(x)\overline{\hat{g}(\omega)},\qquad
f,g\in L^2(\R^d),
\eeqsn
would be equivalent to \eqref{14}. As a matter of fact,
\beqsn
&&\int_{\complement Q_\xi}|R(f,\tilde f)(x,\omega)|^2dxd\omega
=\int_{\complement Q_\xi}|f(x)\cdot\hat{\tilde{f}}(\omega)|^2dxd\omega\\
=&&\int_{\complement Q_\xi}|f(x)\cdot\hat{f}(-\omega)|^2dxd\omega
=\int_{\complement Q_\xi}|f(x)\cdot\hat{f}(\omega)|^2dxd\omega\\
=&&\int_{\complement Q_\xi}|Rf(x,\omega)|^2dxd\omega.
\eeqsn
This clarifies why the compactness criterion in Theorem~\ref{th1} requires
only one condition on the Rihaczek distribution, while we have two conditions on the Wigner distribution
in Proposition~\ref{prop2}.

As a consequence of Theorem~\ref{th1} we can state the following umbrella theorem for the
Rihaczek distribution:

\begin{Cor}
\label{corRiha}
Let $\varphi\in L^2(\R^{2d})$ and $\{g_n\}_{n\in J}$, for some $J\subseteq\N$,  an orthonormal sequence of functions in $L^2(\R^d)$
that satisfy, for all $n\in J$ and for almost every $x,\omega\in\R^d$,
\beqsn
|Rg_n(x,\omega)|\leq|\varphi(x,\omega)|.
\eeqsn
Then the sequence $\{g_n\}_{n\in J}$ must be finite.
\end{Cor}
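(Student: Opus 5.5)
We argue by contradiction, combining the orthonormality of $\{g_n\}$ with the compactness criterion of Theorem~\ref{th1}, in the same way Corollary~\ref{cor1} was derived from Proposition~\ref{prop1}. Suppose the index set $J$ is infinite, and set $K:=\{g_n\}_{n\in J}$. This set is bounded in $L^2(\R^d)$, since every element has norm $1$.

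\emph{Step 1: $K$ is not relatively compact.} For $n\neq m$ orthonormality gives $\|g_n-g_m\|^2_{L^2(\R^d)}=2$. Hence no subsequence of $\{g_n\}_{n\in J}$ is Cauchy, and so none converges in $L^2(\R^d)$. Because $J$ is infinite, this shows that $K$ is not relatively compact.

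\emph{Step 2: the domination forces condition \eqref{14}.} By hypothesis $|Rg_n(x,\omega)|\le|\varphi(x,\omega)|$ almost everywhere, for every $n\in J$. Squaring and integrating over $\complement Q_\xi$ gives
\[
\sup_{n\in J}\int_{\complement Q_\xi}|Rg_n(x,\omega)|^2\,dx\,d\omega\le\int_{\complement Q_\xi}|\varphi(x,\omega)|^2\,dx\,d\omega .
\]
Since $\varphi\in L^2(\R^{2d})$ and the sets $\complement Q_\xi$ decrease to the empty set as $\xi\to+\infty$, the right-hand side tends to $0$ by dominated (or monotone) convergence. Therefore condition \eqref{14} holds for $K$.

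\emph{Step 3: conclusion.} Theorem~\ref{th1} now says that $K$ is relatively compact, which contradicts Step 1. Hence $J$ must be finite.

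The argument has no real obstacle. The only points needing care are that $\{g_n\}_{n\in J}$ being infinite is exactly what makes Step 1 work, and that the tail integral of a fixed $L^2$ function over the shrinking sets $\complement Q_\xi$ vanishes. Note also that, unlike in Corollary~\ref{cor1}, no integrability assumption on $g_n$ or $\hat g_n$ is needed, because Theorem~\ref{th1} applies to arbitrary bounded subsets of $L^2(\R^d)$.
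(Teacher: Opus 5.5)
Your proof is correct and follows essentially the same route as the paper. An infinite orthonormal family is bounded but has no convergent subsequence, so by Theorem~\ref{th1} the limit in \eqref{14} cannot vanish, contradicting $\sup_{n}\int_{\complement Q_\xi}|Rg_n|^2\le\int_{\complement Q_\xi}|\varphi|^2\to0$. The only cosmetic difference is that the paper first reduces to $J=\N$, while you work with $J$ directly.
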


\begin{proof}
Let us assume by contradiction that $J$ is infinite and take $j=\N$ without any loss of generality.
By the orthogonality, the sequence $\{g_n\}_{n\in\N}$ cannot have any convergent subsequence.
Therefore the set $K:=\{g_n; n\in\N\}$ is a bounded (by assumption $\|g_n\|_{L^2(\R^d)}=1$) subset of $L^2(\R^d)$ that cannot be relatively compact. 
By Theorem~\ref{th1} we must then have that (remember that the limit always exists since the integral is decreasing in $\xi$):
\beqsn
\lim_{\xi\to+\infty}\sup_{n\in\N}\int_{\complement Q_\xi}|Rg_n(x,\omega)|^2dxd\omega>0.
\eeqsn
This gives a contradiction since by assumption
\beqsn
\sup_{n\in\N}\int_{\complement Q_\xi}|Rg_n(x,\omega)|^2dxd\omega
\leq\int_{\complement Q_\xi}|\varphi(x,\omega)|^2dxd\omega\longrightarrow0,\quad
\mbox{as}\ \xi\to+\infty,
\eeqsn
because of $\varphi\in L^2(\R^{2d})$.
\end{proof}

Let us now consider a {\em Riesz sequence} $\{u_n\}_{n\in\N}$ in $L^2(\R^d)$, which means that we can find an invertible linear operator $U:\, L^2(\R^d)\to L^2(\R^d)$ such that
\beqsn
U(u_n)=g_n,\qquad\forall n\in\N,
\eeqsn for an orthonormal sequence $\{g_n\}_{n\in\N}$ in $L^2(\R^d)$.

Similarly as in Corollary~\ref{corRiha} we can prove the following:
\begin{Cor}
\label{corRiesz}
Let $\varphi\in L^2(\R^{2d})$ and $\{u_n\}_{n\in J}$, for some $J\subseteq\N$,  a Riesz sequence
 in $L^2(\R^d)$
that satisfy, for all $n\in J$ and for almost every $x,\omega\in\R^d$,
\beqsn
|Ru_n(x,\omega)|\leq|\varphi(x,\omega)|.
\eeqsn
Then the sequence $\{u_n\}_{n\in J}$ must be finite.
\end{Cor}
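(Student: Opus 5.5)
We argue exactly as in the proof of Corollary~\ref{corRiha}, replacing the orthonormality argument by the corresponding property of Riesz sequences. Assume by contradiction that $J$ is infinite, and take $J=\N$ without loss of generality. Let $U$ be the bounded invertible operator with $Ug_n=u_n$ for an orthonormal sequence $\{g_n\}$; the definition states $U(u_n)=g_n$, but since $U$ is invertible we may equally use $U^{-1}$. By the bounded inverse theorem both $U$ and $U^{-1}$ are bounded. The set $K:=\{u_n:\,n\in\N\}$ is then bounded in $L^2(\R^d)$, since $\|u_n\|\le\|U^{-1}\|\,\|g_n\|=\|U^{-1}\|$.

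The first key step is to show that $K$ is not relatively compact. Suppose some subsequence $u_{n_k}$ converged in $L^2(\R^d)$. Applying the continuous operator $U$, the sequence $g_{n_k}=U u_{n_k}$ would also converge. This is impossible, because $\|g_{n_k}-g_{n_h}\|=\sqrt2$ for $k\ne h$ by orthonormality. Alternatively, one can use $\|u_n-u_m\|\ge \|U\|^{-1}\|g_n-g_m\|=\sqrt2\,\|U\|^{-1}$ directly. Either way, $\{u_n\}$ has no Cauchy subsequence.

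The second step applies Theorem~\ref{th1}. Since $K$ is bounded but not relatively compact, condition \eqref{14} must fail, that is,
\[
\lim_{\xi\to+\infty}\sup_{n\in\N}\int_{\complement Q_\xi}|Ru_n(x,\omega)|^2dxd\omega>0 .
\]
On the other hand, the pointwise domination $|Ru_n|\le|\varphi|$ gives
\[
\sup_{n\in\N}\int_{\complement Q_\xi}|Ru_n|^2dxd\omega\le\int_{\complement Q_\xi}|\varphi|^2dxd\omega ,
\]
and the right-hand side tends to $0$ as $\xi\to+\infty$ because $\varphi\in L^2(\R^{2d})$, by dominated convergence. These two facts contradict each other.

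There is no real obstacle in this argument. The only point needing care is the non-relative-compactness of a Riesz sequence, which rests on the boundedness of $U$ (and of $U^{-1}$, for the boundedness of $K$). Both follow from the bounded inverse theorem if $U$ is only assumed to be a continuous bijection.
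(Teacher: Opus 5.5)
Your proof is correct and is essentially the paper's: the paper also gets boundedness from $\|u_n\|\le\|U^{-1}\|$ and uniform separation from $\sqrt2=\|U(u_n)-U(u_k)\|\le\|U\|\,\|u_n-u_k\|$, then reruns the argument of Corollary~\ref{corRiha} via Theorem~\ref{th1}. The only blemish is that your opening sentence fixes $Ug_n=u_n$ while everything afterwards uses the paper's convention $Uu_n=g_n$, which is harmless since both $U$ and $U^{-1}$ are bounded.
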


\begin{proof}
It's enough to remark that, for all $n\in J$,
\beqsn
\sqrt{2}=\|g_n-g_k\|_{L^2(\R^d)}=\|U(u_n)-U(u_k)\|_{L^2(\R^d)}
\leq \|U\|\cdot \|u_n-u_k\|_{L^2(\R^d)}
\eeqsn 
and
\beqsn
\|u_n\|_{L^2(\R^d)}=\|U^{-1}(g_n)\|_{L^2(\R^d)}
\leq\|U^{-1}\|\cdot\|g_n\|_{L^2(\R^d)}=\|U^{-1}\|.
\eeqsn
We can thus argue as in the proof of Corollary~\ref{corRiha}.
\end{proof}

\begin{Rem}
\label{remframe}
\begin{em}
Let us remark that also in the case of the Wigner distribution (Corollary~\ref{cor1}) we can consider Riesz sequences as well (and also in the classical Shapiro's Umbrella Theorem).
In particular, all these umbrella theorems can be applied to {\em exact frames}, which are {\em Riesz bases}
(cf. \cite[\S~5.1]{G}). Other more general sequences could also be considered. For instance, a
{\em near-Riesz basis}, i.e. a sequence $\{g_n\}_{n\in\N}$ for which there exists a finite set
$F\subseteq \N$ such that $\{g_n\}_{n\in\N\setminus F}$ is a Riesz basis (cf. \cite{Holub}), works as well.
\end{em}
\end{Rem}

\section{Shapiro's Umbrella Theorems for the Mellin transform}
\label{sec4}

Let us now consider the space $L^2(\R^+):=L^2(\R^+,r^{-1}dr)$ of measurable functions on 
$\R^+:=(0,+\infty)$ that are square integrable with respect to the {\em Haar measure} 
$r^{-1}dr$.

The aim of this section is to obtain compactness criteria and, consequently, umbrella theorems
in $L^2(\R^+)$ substituting the Fourier transform with the {\em Mellin transform}, that is defined
(we use the same definition as in \cite{BBL}),
for $f\in L^2(\R^+)$, by:
\beqsn
\M f(\omega):=\int_0^{+\infty}f(r)r^{-2\pi i\omega}\frac{dr}{r},\qquad\omega\in\R.
\eeqsn

Let us remark that if we consider the bijective application
\beqs
\label{23}
\begin{split}
E: \R&\longrightarrow&\!\!\!\R^+\\
x&\longmapsto &e^x
\end{split}
\eeqs
we have an application
\beqs
\label{24}
\begin{split}
E^*: L^2(\R^+)&\longrightarrow&\!\!\!L^2(\R)\\
f&\longmapsto &f(e^x),
\end{split}
\eeqs
so that $(E^*f)(x)=f(E(x))$. It is an isometry since
\beqs
\label{21}
\|E^*f\|^2_{L^2(\R)}=\int_{-\infty}^{+\infty}|f(e^x)|^2dx
=\int_0^{+\infty}|f(t)|^2\frac{dt}{t}=\|f\|^2_{L^2(\R^+)}
\eeqs
by the change of variables $t=e^x$.

Also $\M: L^2(\R^+)\to L^2(\R)$ is an isometry since
\beqsn
\M f(\omega)=\int_{-\infty}^{+\infty}f(e^x)e^{-2\pi i\omega x}dx
=\F(E^*f)(\omega),\qquad\omega\in\R,
\eeqsn
and then by Plancherel's theorem
\beqs
\label{22}
\|\M f\|_{L^2(\R)}=\|\F(E^*f)\|_{L^2(\R)}=\|E^*f\|_{L^2(\R)}=\|f\|_{L^2(\R^+)}.
\eeqs

Our next goal is to prove the analogue of Theorem~\ref{thRSST21} in $L^2(\R^+)$ with the Mellin transform instead of the Fourier transform:

\begin{Th}
\label{thMellin1}
A bounded set $K$ in $L^2(\R^+)$ is relatively compact in $L^2(\R^+)$ if and only if
\beqs
\label{19}
\lim_{\xi\to+\infty}\sup_{f\in K}\left(\int_{\R^+\setminus\left[\frac1\xi,\xi\right]}|f(x)|^2
\frac{dx}{x}+\int_{\R\setminus[-\xi,\xi]}|\M f(\omega)|^2d\omega\right)=0.
\eeqs
\end{Th}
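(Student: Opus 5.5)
The plan is to transfer the problem to $L^2(\R)$ via the isometry $E^*$ of \eqref{24} and then apply Theorem~\ref{thRSST21} with $d=1$. By \eqref{21}, $E^*$ is an isometry of $L^2(\R^+)$ into $L^2(\R)$. It is also onto, with inverse $g\mapsto g(\log r)$, since the same change of variables $t=e^x$ works in reverse. Hence $E^*$ is a bijective isometry. It therefore maps bounded sets to bounded sets, and a set $K\subseteq L^2(\R^+)$ is relatively compact if and only if $E^*K:=\{E^*f:\ f\in K\}$ is relatively compact in $L^2(\R)$. The reason is that $E^*$ and its inverse are continuous, so they preserve convergent subsequences and Cauchy sequences.

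Next I rewrite both terms in \eqref{19} in terms of $g:=E^*f$.

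\emph{First term.} Substitute $x=e^u$. Then $x\in\R^+\setminus[\frac1\xi,\xi]$ exactly when $|u|>\log\xi$, so
\[
\int_{\R^+\setminus[\frac1\xi,\xi]}|f(x)|^2\frac{dx}{x}=\int_{|u|>\log\xi}|g(u)|^2du .
\]

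\emph{Second term.} By the computation before \eqref{22}, $\M f=\F g$, so it equals $\int_{|\omega|>\xi}|\hat g(\omega)|^2d\omega$.

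So the expression inside the supremum in \eqref{19} is
\[
\Phi_f(\xi):=\int_{|u|>\log\xi}|g(u)|^2du+\int_{|\omega|>\xi}|\hat g(\omega)|^2d\omega .
\]

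The one point needing care is that the two tails are cut at different heights, $\log\xi$ and $\xi$, whereas condition \eqref{1} uses the same $\xi$ in both. I will handle this through the split form \eqref{7} (condition (ii) of Proposition~\ref{prop2}, with $d=1$), which separates the two tails.

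\emph{\eqref{19} implies relative compactness.} Since both integrals are nonnegative, \eqref{19} gives
\[
\lim_{\xi\to+\infty}\sup_{f\in K}\int_{|u|>\log\xi}|g(u)|^2du=0 .
\]
Setting $\eta=\log\xi\to+\infty$, this is the first line of \eqref{7} for $E^*K$. The second line of \eqref{7} is exactly the second piece of \eqref{19}. Hence $E^*K$ satisfies \eqref{7} and is relatively compact by Theorem~\ref{thRSST21}, and therefore so is $K$.

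\emph{Relative compactness implies \eqref{19}.} If $K$ is relatively compact, then so is $E^*K$, so both lines of \eqref{7} hold for $E^*K$ by Theorem~\ref{thRSST21}. Since $\log\xi\to+\infty$ as $\xi\to+\infty$, the first line gives $\sup_{f\in K}\int_{|u|>\log\xi}|g|^2\to0$, and the second line gives the Mellin tail. Using $\sup(a+b)\le\sup a+\sup b$ yields \eqref{19}.

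The only genuinely delicate steps are the surjectivity of $E^*$, which is needed so that relative compactness transfers in both directions, and the decoupling of the mismatched cutoffs $\log\xi$ and $\xi$. Both are handled above.
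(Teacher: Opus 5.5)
Your proof is correct, but it takes a different route from the paper's. The paper does not reduce to Theorem~\ref{thRSST21}. Instead, it re-runs the Fr\'echet--Kolmogorov argument directly in the Mellin setting.

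\emph{The paper's route.} For sufficiency it verifies conditions $(i)$--$(iii)$ of Theorem~\ref{thFK} for $\M(K)$. The key step is the modulation identity $\tau_y\M f=\M(r^{-2\pi iy}f)$, which together with the isometry \eqref{22} gives $\|\tau_y\M f-\M f\|^2_{L^2(\R)}=4\int_0^{+\infty}\sin^2(\pi y\log r)|f(r)|^2\frac{dr}{r}$. This is made small by splitting $\R^+$ into two parts: the tail outside $[\frac1\xi,\xi]$, controlled by \eqref{19}, and the core, where $\sin^2(\pi y\log r)\le\pi^2y^2\log^2\xi$. For necessity it combines the multiplicative version, Theorem~\ref{thFK2} applied to $K$, with Theorem~\ref{thFK} applied to $\M(K)$.

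\emph{Your route.} You observe that conjugating by $E^*$ turns $\M$ into $\F$, so the statement becomes a corollary of Theorem~\ref{thRSST21} with $d=1$. The only adjustment is the mismatched cutoffs $\log\xi$ and $\xi$, which you correctly decouple through the split form \eqref{7}. The paper records essentially this equivalence in the remark following its proof, but does not use it as the proof.

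\emph{Comparison.} Your argument is shorter and needs nothing beyond Theorem~\ref{thRSST21}. The paper's argument is self-contained modulo Fr\'echet--Kolmogorov. It also shows the mechanism by which tightness of $f$ in $\log r$ forces equicontinuity of the translates of $\M f$, and it yields Theorem~\ref{thFK2} along the way.

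A minor point: surjectivity of $E^*$ is not actually needed to pull relative compactness back from $E^*K$ to $K$. An isometry defined on a complete space already does this, since a Cauchy subsequence of $(E^*f_n)$ gives a Cauchy, hence convergent, subsequence of $(f_n)$. That is all the paper uses when it transfers compactness through $\M$.
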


To prove it we first need to recall the classic Fr\'echet-Kolmogorov Theorem (see for instance \cite[Chap.~X]{Y} or \cite[\S~2]{RSST}):
\begin{Th}
\label{thFK}
A set $K\subseteq L^2(\R)$ is relatively compact in $L^2(\R)$ if and only if the following three
conditions are satisfied:
\begin{enumerate}[$(i)$]
\item
$\ds\sup_{f\in K}\|f\|_{L^2(\R)}<+\infty$
\item
$\ds\lim_{y\to0}\sup_{f\in K}\|\tau_yf-f\|_{L^2(\R)}=0$
\item
$\ds\lim_{\xi\to+\infty}\sup_{f\in K}\|1_{\R\setminus[-\xi,\xi]}f\|_{L^2(\R)}=0$,
\end{enumerate}
where $\tau_yf(x):=f(x+y)$ and $1_{\R\setminus[-\xi,\xi]}$ denotes the characteristic function of
$\R\setminus[-\xi,\xi]$.
\end{Th}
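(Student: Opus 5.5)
The statement to prove is Theorem~\ref{thFK}. I would prove it through the characterization of relative compactness in the complete space $L^2(\R)$ as total boundedness: for every $\varepsilon>0$ the set $K$ should admit a finite $\varepsilon$-net.

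\emph{Necessity.} Assume $K$ is relatively compact, hence totally bounded. Then $(i)$ is immediate. Fix $\varepsilon>0$ and a finite $\varepsilon$-net $f_1,\dots,f_N\in L^2(\R)$ for $K$.
\begin{itemize}
\item For each single $f_j$, the translation $y\mapsto\tau_y f_j$ is continuous in $L^2(\R)$. This follows by density of $C_c(\R)$ and uniform continuity of compactly supported functions.
\item For each single $f_j$, dominated convergence gives $\|1_{\R\setminus[-\xi,\xi]}f_j\|_{L^2(\R)}\to0$.
\item Since $\tau_y$ and multiplication by $1_{\R\setminus[-\xi,\xi]}$ have norm at most $1$, for $f\in K$ with $\|f-f_j\|<\varepsilon$ we get $\|\tau_yf-f\|\le 2\varepsilon+\|\tau_yf_j-f_j\|$. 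Similarly, $\|1_{\R\setminus[-\xi,\xi]}f\|\le\varepsilon+\|1_{\R\setminus[-\xi,\xi]}f_j\|$.
\item Taking the maximum over the finitely many $j$ yields the uniform limits $(ii)$ and $(iii)$.
\end{itemize}

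\emph{Sufficiency.} Assume $(i)$--$(iii)$ and fix $\varepsilon>0$.
\begin{itemize}
\item Take a mollifier $\rho_\delta\ge0$ supported in $[-\delta,\delta]$ with $\int\rho_\delta=1$.
\item By Minkowski's integral inequality, $\|f*\rho_\delta-f\|_{L^2(\R)}\le\sup_{|y|\le\delta}\|\tau_{-y}f-f\|_{L^2(\R)}$. By $(ii)$ this is $<\varepsilon$ uniformly in $f\in K$ for $\delta$ small.
\item Next choose $\xi$ by $(iii)$ so that $\|1_{\R\setminus[-\xi,\xi]}f\|<\varepsilon$ for all $f\in K$.
\item It then suffices to show that $K_{\delta,\xi}:=\{1_{[-\xi,\xi]}(f*\rho_\delta): f\in K\}$ is totally bounded in $L^2(\R)$. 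Indeed, $\|f-1_{[-\xi,\xi]}(f*\rho_\delta)\|\le\|f-f*\rho_\delta\|+\|1_{\R\setminus[-\xi,\xi]}(f*\rho_\delta)\|$. The last term is at most $\|1_{\R\setminus[-\xi,\xi]}f\|+\|f*\rho_\delta-f\|<2\varepsilon$, so every $f\in K$ lies within $3\varepsilon$ of $K_{\delta,\xi}$.
\end{itemize}

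\emph{Total boundedness of $K_{\delta,\xi}$.} Set $C:=\sup_K\|f\|$, finite by $(i)$. By Cauchy--Schwarz:
\begin{itemize}
\item $|f*\rho_\delta(x)|\le C\|\rho_\delta\|_{L^2}$, a uniform bound;
\item $|f*\rho_\delta(x)-f*\rho_\delta(x')|\le C\|\tau_{x-x'}\rho_\delta-\rho_\delta\|_{L^2}$, an equicontinuity estimate.
\end{itemize}
So the functions $f*\rho_\delta$ restricted to $[-\xi,\xi]$ form a uniformly bounded, equicontinuous family in $C([-\xi,\xi])$. By Arzel\`a--Ascoli this family is totally bounded in the sup norm. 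Since $\|g\|_{L^2([-\xi,\xi])}\le\sqrt{2\xi}\,\|g\|_\infty$, it is also totally bounded in $L^2$. An $\varepsilon$-net for $K_{\delta,\xi}$ is then a $4\varepsilon$-net for $K$, which concludes the proof.

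The only delicate step is the sufficiency bookkeeping. One must order the choices correctly: first $\delta$, then $\xi$. One must also control the tail of $f*\rho_\delta$ by the tail of $f$ plus the mollification error, rather than attempting a support argument. The Arzel\`a--Ascoli step itself is routine.
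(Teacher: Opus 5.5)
Your proof is correct and complete. Necessity goes through a finite $\varepsilon$-net together with continuity of translation and vanishing tails for each single net element; sufficiency goes through uniform mollification via Minkowski, truncation to $[-\xi,\xi]$ with the tail of $f*\rho_\delta$ controlled by $\|1_{\R\setminus[-\xi,\xi]}f\|+\|f*\rho_\delta-f\|$, and Arzel\`a--Ascoli.

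The paper gives no proof of its own here; it simply quotes the classical Fr\'echet--Kolmogorov theorem from \cite[Chap.~X]{Y} and \cite[\S~2]{RSST}. Your argument is essentially the standard textbook proof, where the classical version uses local means $\frac1{2a}\int_{x-a}^{x+a}f(t)\,dt$ instead of a smooth mollifier, which makes no difference. One small point: in your argument the choices of $\delta$ and $\xi$ are independent of each other, so no particular ordering is actually needed.
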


This yields the analogous result in $L^2(\R^+)$:

\begin{Th}
\label{thFK2}
A set $K\subseteq L^2(\R^+)$ is relatively compact in $L^2(\R^+)$ if and only if the following three
conditions are satisfied:
\begin{enumerate}[$(i)^*$]
\item
$\ds\sup_{f\in K}\|f\|_{L^2(\R^+)}<+\infty$
\item
$\ds\lim_{y\to1}\sup_{f\in K}\|\tau^+_yf-f\|_{L^2(\R^+)}=0$
\item
$\ds\lim_{\xi\to+\infty}\sup_{f\in K}\|1_{\R^+\setminus\left[\frac1\xi,\xi\right]}f\|_{L^2(\R^+)}=0$,
\end{enumerate}
where $\tau^+_yf(x):=f(xy)$.
\end{Th}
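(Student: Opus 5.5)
We transfer Theorem~\ref{thFK} to $L^2(\R^+)$ through the isometry $E^*$ of \eqref{24}. By \eqref{21}, $E^*$ is a linear isometry of $L^2(\R^+)$ onto $L^2(\R)$. Surjectivity holds because $g\mapsto g(\log r)$ is an inverse, with the same change of variables $t=e^x$. A linear surjective isometry is a homeomorphism, so it maps relatively compact sets to relatively compact sets in both directions. Hence $K$ is relatively compact in $L^2(\R^+)$ if and only if $E^*K:=\{E^*f:\ f\in K\}$ is relatively compact in $L^2(\R)$. By Theorem~\ref{thFK}, this holds if and only if $E^*K$ satisfies $(i)$--$(iii)$. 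It then remains to check that $(i)$, $(ii)$, $(iii)$ for $E^*K$ are exactly $(i)^*$, $(ii)^*$, $(iii)^*$ for $K$.

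Condition $(i)$ is immediate from \eqref{21}: $\|E^*f\|_{L^2(\R)}=\|f\|_{L^2(\R^+)}$.

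For $(ii)$, we compute the effect of a translation. Since $\tau_y(E^*f)(x)=f(e^{x+y})=f(e^xe^y)=E^*(\tau^+_{e^y}f)(x)$, we get
\[
\|\tau_y E^*f-E^*f\|_{L^2(\R)}=\|E^*(\tau^+_{e^y}f-f)\|_{L^2(\R)}=\|\tau^+_{e^y}f-f\|_{L^2(\R^+)} .
\]
The map $y\mapsto e^y$ is a homeomorphism of $\R$ onto $\R^+$ sending $0$ to $1$. Therefore $\lim_{y\to0}\sup_{f\in K}\|\tau^+_{e^y}f-f\|_{L^2(\R^+)}=0$ is equivalent to $\lim_{z\to1}\sup_{f\in K}\|\tau^+_zf-f\|_{L^2(\R^+)}=0$. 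This is $(ii)^*$.

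For $(iii)$, note that $E$ maps $[-\log\xi,\log\xi]$ onto $[\frac1\xi,\xi]$ when $\xi>1$. The change of variables $t=e^x$ then gives
\[
\|1_{\R\setminus[-\log\xi,\log\xi]}E^*f\|_{L^2(\R)}=\|1_{\R^+\setminus[\frac1\xi,\xi]}f\|_{L^2(\R^+)} .
\]
The parameter $\eta=\log\xi$ tends to $+\infty$ exactly when $\xi$ does. Both quantities are nonincreasing in their parameter, so the two limits coincide, and $(iii)$ for $E^*K$ is $(iii)^*$ for $K$.

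The argument is essentially routine. The only points needing care are these: $E^*$ is onto, so that relative compactness transfers in both directions; the multiplicative limit $y\to1$ is matched correctly with the additive limit $y\to0$; and the reparametrization of the truncation sets is handled consistently.
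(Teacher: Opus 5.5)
Your proposal is correct and follows the paper's proof: transfer through the isometry $E^*$, then check that $(i)$--$(iii)$ for $\{E^*f:\ f\in K\}$ correspond to $(i)^*$--$(iii)^*$ via the reparametrizations $s=\log y$ and $\eta=\log\xi$. Your explicit remark that $E^*$ is onto, so that relative compactness transfers in both directions, is a detail the paper leaves implicit. Closedness of the isometric image of a complete space would also suffice for that step.
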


\begin{proof}
It follows from Theorem~\ref{thFK} by the isometry $E^*$ defined in \eqref{24}.
As a matter of fact, $K\subseteq L^2(\R^+)$ is relatively compact in $L^2(\R^+)$ if and only if
\beqsn
K^*:=\{E^*f;\ f\in K\}
\eeqsn
is relatively compact in $L^2(\R)$, i.e. if and only if $K^*$ satisfies conditions  $(i),(ii),(iii)$
of Theorem~\ref{thFK}.
It is then enough to check that the analogous conditions $(i)^*,(ii)^*$ and $(iii)^*$ on $K$ are equivalent to
$(i),(ii)$ and $(iii)$, respectively, on $K^*$. Indeed:

$(i)^*$: From \eqref{21}
\beqsn
\sup_{f\in K}\|f\|_{L^2(\R^+)}
=\sup_{f\in K}\|E^*f\|_{L^2(\R)}
=\sup_{f\in K^*}\|f\|_{L^2(\R)}.
\eeqsn

$(ii)^*$:
\beqsn
&&\lim_{y\to1}\sup_{f\in K}\|\tau^+_y f-f\|_{L^2(\R^+)}\\
=&&\lim_{y\to1}\sup_{f\in K}\sqrt{\int_0^{+\infty}|f(ty)-f(t)|^2\frac{dt}{t}}\\
=&&\lim_{y\to1}\sup_{f\in K}\sqrt{\int_{-\infty}^{+\infty}|f(e^{x+\log y})-f(e^x)|^2dx}\\
=&&\lim_{y\to1}\sup_{f\in K}\sqrt{\int_{-\infty}^{+\infty}|E^*f(x+\log y)-E^*f(x)|^2dx}\\
=&&\lim_{s\to0}\sup_{f\in K}\|\tau_s(E^*f)-E^*f\|_{L^2(\R)}\\
=&&\lim_{s\to0}\sup_{f\in K^*}\|\tau_sf-f\|_{L^2(\R)}.
\eeqsn

$(iii)^*$:
\beqsn
&&\lim_{\xi\to+\infty}\sup_{f\in K}\|1_{\R^+\setminus\left[\frac1\xi,\xi\right]}f\|_{L^2(\R^+)}\\
=&&\lim_{\xi\to+\infty}\sup_{f\in K}\sqrt{\int_{\R^+\setminus\left[\frac{1}{\xi},\xi\right]}|f(t)|^2\frac{dt}{t}}\\
=&&\lim_{\xi\to+\infty}\sup_{f\in K}\sqrt{\int_{\R\setminus[-\log\xi,\log\xi]}|f(e^x)|^2dx}\\
=&&\lim_{\eta\to+\infty}\sup_{f\in K}\|1_{\R\setminus[-\eta,\eta]}E^*f\|_{L^2(\R)}\\
=&&\lim_{\eta\to+\infty}\sup_{f\in K^*}\|1_{\R\setminus[-\eta,\eta]}f\|_{L^2(\R)}.
\eeqsn

The proof is complete.
\end{proof}

\begin{proof}[Proof of Theorem~\ref{thMellin1}]

Here we follow the ideas of \cite[Thm.~2.1]{RSST}, adapting the arguments to our framework.

Let
\beqsn
\M (K):=\{\M f;\ f\in K\},
\eeqsn
assume that \eqref{19} is satisfied and prove that $\M (K)$ satisfies conditions 
$(i),(ii),(iii)$ of Theorem~\ref{thFK}.

$(i)$:
From \eqref{22} we have that
\beqsn
\sup_{f\in K}\|\M f\|_{L^2(\R)}=
\sup_{f\in K}\| f\|_{L^2(\R^+)}<+\infty
\eeqsn
since $K$ is bounded in $L^2(\R^+)$.

$(iii)$:
Let's do the calculation for $\|\cdot\|^2_{L^2(\R)}$ instead of $\|\cdot\|_{L^2(\R)}$.
Clearly $(iii)$ will follow.

\beqsn
\lim_{\xi\to+\infty}\sup_{f\in K}\|1_{\R\setminus[-\xi,\xi]}\M f\|^2_{L^2(\R)}
=\lim_{\xi\to+\infty}\sup_{f\in K}\int_{|\omega|>\xi}|\M f(\omega)|^2d\omega=0
\eeqsn
by \eqref{19}.

$(ii)$:
Here again we do the calculation for the squared norm:

\beqs
\nonumber
\sup_{f\in K}\|\tau_y\M f-\M f\|^2_{L^2(\R)}
=&&\sup_{f\in K}\|\M f(x+y)-\M f(x)\|^2_{L^2(\R)}\\
\label{20}
=&&\sup_{f\in K}\|\M(r^{-2\pi i y}f(r))(x)-\M (f(r))(x)\|^2_{L^2(\R)},
\eeqs
since
\beqsn
\M f(x+y)=&&\int_0^{+\infty}f(r)r^{-2\pi i(x+y)}\frac{dr}{r}
=\int_0^{+\infty}(r^{-2\pi iy}f(r))r^{-2\pi ix}\frac{dr}{r}\\
=&&\M(r^{-2\pi iy}f(r))(x).
\eeqsn

From \eqref{20} and \eqref{22}:
\beqs
\nonumber
\sup_{f\in K}\|\tau_y\M f-\M f\|^2_{L^2(\R)}
=&&\sup_{f\in K}\|\M(f(r)(r^{-2\pi i y}-1))\|^2_{L^2(\R)}\\
\nonumber
=&&\sup_{f\in K}\|f(r)(r^{-2\pi i y}-1)\|^2_{L^2(\R^+)}\\
\label{25}
=&&\sup_{f\in K}\int_0^{+\infty}|f(r)|^2|r^{-2\pi iy}-1|^2\frac{dr}{r}.
\eeqs

Computing
\beqsn
|r^{-2\pi iy}-1|^2=&&|e^{-2\pi iy\log r}-1|^2\\
=&&|\cos(-2\pi y\log r)+i\sin(-2\pi y\log r)-1|^2\\
=&&(\cos(2\pi y\log r)-1)^2+\sin^2(2\pi y\log r)\\
=&&\cos^2(2\pi y\log r)-2\cos(2\pi y\log r)+1+\sin^2(2\pi y\log r)\\
=&&2(1-\cos(2\pi y\log r))
=4\sin^2(\pi y\log r)
\eeqsn
and inserting it into \eqref{25} we get:
\beqs
\nonumber
&&\sup_{f\in K}\|\tau_y\M f-\M f\|^2_{L^2(\R)}
=4\sup_{f\in K}\int_0^{+\infty}\sin^2(\pi y\log r)|f(r)|^2\frac{dr}{r}\\
\nonumber
=&&4\sup_{f\in K}\left(\int_{\R^+\setminus\left[\frac1\xi,\xi\right]}\sin^2(\pi y\log r)|f(r)|^2\frac{dr}{r}
+\int_{\left[\frac1\xi,\xi\right]}\sin^2(\pi y\log r)|f(r)|^2\frac{dr}{r}\right)\\
\nonumber
\leq&&4\sup_{f\in K}\left(\int_{\R^+\setminus\left[\frac1\xi,\xi\right]}|f(r)|^2\frac{dr}{r}
+\int_{\left[\frac1\xi,\xi\right]}\pi^2 y^2\log^2 r|f(r)|^2\frac{dr}{r}\right)\\
\nonumber
\leq&&4\sup_{f\in K}\int_{\R^+\setminus\left[\frac1\xi,\xi\right]}|f(r)|^2\frac{dr}{r}
+4\pi^2y^2\left(\sup_{\frac1\xi\leq r\leq\xi}\log^2 r\right)\cdot\sup_{f\in K}
\int_0^{+\infty}|f(r)|^2\frac{dr}{r}\\
\label{26}
=&&4\sup_{f\in K}\int_{\R^+\setminus\left[\frac1\xi,\xi\right]}|f(r)|^2\frac{dr}{r}
+4\pi^2y^2\log^2 \xi\,\sup_{f\in K}\|f\|^2_{L^2(\R^+)}.
\eeqs

Now, given $\varepsilon>0$ by \eqref{19} we can
choose $M>0$ and $\xi^*>M$ such that
\beqs
\label{27}
\sup_{f\in K}
\int_{\R^+\setminus\left[\frac{1}{\xi^*},\xi^*\right]}|f(r)|^2\frac{dr}{r}<\frac\varepsilon8.
\eeqs
Recalling that
$\sup_{f\in K}\|f\|_{L^2(\R^+)}$ is bounded by assumption, we
then take $\delta>0$ such that for $|y|<\delta$, 
\beqs
\label{28}
\pi^2y^2\log^2\xi^*\,\sup_{f\in K}\|f\|^2_{L^2(\R^+)}<\frac\varepsilon8.
\eeqs

Substituting \eqref{27} and \eqref{28} into \eqref{26} for $\xi=\xi^*$, we finally have that
\beqsn
\forall\varepsilon>0\,\exists\delta>0:\ |y|<\delta\ \Rightarrow\ 
\sup_{f\in K}\|\tau_y\M f-\M f\|^2_{L^2(\R)}<\varepsilon,
\eeqsn
i.e.
\beqsn
\lim_{y\to0}\sup_{f\in K}\|\tau_y\M f-\M f\|_{L^2(\R)}=0.
\eeqsn

By Theorem~\ref{thFK} we thus have that $\M(K)$ is relatively compact in
$L^2(\R)$ and hence $K$ is relatively compact in $L^2(\R^+)$ since the Mellin
transform is an isometry by \eqref{22}.

On the contrary, if $K$ is relatively compact in $L^2(\R^+)$, then 
$\M(K)$ is relatively compact in
$L^2(\R)$. It follows that $K$ satisfies $(i)^*,(ii)^*, (iii)^*$ of Theorem~\ref{thFK2} and 
$\M(K)$ satisfies $(i),(ii),(iii)$ of Theorem~\ref{thFK}.
In particular, from $(iii)^*$ and $(iii)$:
\beqsn
\begin{cases}
\ds \lim_{\xi\to+\infty}\sup_{f\in K}\int_{\R^+\setminus\left[\frac1\xi,\xi\right]}
|f(x)|^2\frac{dx}{x}=0\cr
\ds\lim_{\xi\to+\infty}\sup_{f\in K}\int_{\R\setminus[-\xi,\xi]}
|\M f(\omega)|^2d\omega=0
\end{cases}
\eeqsn
and hence \eqref{19} follows by the inequality
\beqsn
&&\lim_{\xi\to+\infty}\sup_{f\in K}\left(\int_{\R^+\setminus\left[\frac1\xi,\xi\right]}
|f(x)|^2\frac{dx}{x}+\int_{\R\setminus[-\xi,\xi]}|\M f(\omega)|^2d\omega\right)\\
\leq&&\lim_{\xi\to+\infty}\left(\sup_{f\in K}\int_{\R^+\setminus\left[\frac1\xi,\xi\right]}
|f(x)|^2\frac{dx}{x}+\sup_{f\in K}\int_{\R\setminus[-\xi,\xi]}|\M f(\omega)|^2d\omega\right)
=0.
\eeqsn
\end{proof}

Let us remark that
\beqsn
\int_{\R^+\setminus\left[\frac1\xi,\xi\right]}|f(x)|^2\frac{dx}{x}
=\int_{|t|>\log\xi}|f(e^t)|^2dt
\eeqsn
by the change of variable $x=e^t$, so that
\beqsn
&&\int_{|t|>\xi}|f(e^t)|^2dt+\int_{|\omega|>\xi}|\M f(\omega)|^2d\omega\\
\leq&&\int_{|t|>\log\xi}|f(e^t)|^2dt+\int_{|\omega|>\xi}|\M f(\omega)|^2d\omega
=\int_{\R^+\setminus\left[\frac1\xi,\xi\right]}|f(x)|^2
\frac{dx}{x}+\int_{\R\setminus[-\xi,\xi]}|\M f(\omega)|^2d\omega\\
\leq&&\int_{|t|>\log\xi}|f(e^t)|^2dt+\int_{|\omega|>\log \xi}|\M f(\omega)|^2d\omega
\eeqsn
and \eqref{19} can also be equivalently reformulated as
\beqsn
\lim_{\xi\to+\infty}\sup_{f\in K}
\int_{|x|>\xi}(|f(e^x)|^2+|\M f(x)|^2)dx=0.
\eeqsn

As in the previous sections, we easily get umbrella theorems from
the compactness criterion in Theorem~\ref{thMellin1}.

\begin{Cor}
\label{corMellin1}
Let $\{g_n\}_{n\in\N}$ be an orthonormal sequence (or a Riesz sequence) in $L^2(\R^+)$.
Then at least one of the following two statements hold: 
\beqsn
\sup_{n\in\N}|g_n(x)|\not\in L^2(\R^+)\quad\mbox{or}\quad
\sup_{n\in\N}|\M g_n(\omega)|\not\in L^2(\R).
\eeqsn
\end{Cor}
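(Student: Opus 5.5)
We argue by contradiction, in the same way as in Corollary~\ref{corRiha} but using the compactness criterion of Theorem~\ref{thMellin1}. Assume that both
\[
\varphi(x):=\sup_{n\in\N}|g_n(x)|\in L^2(\R^+)\qquad\mbox{and}\qquad \psi(\omega):=\sup_{n\in\N}|\M g_n(\omega)|\in L^2(\R).
\]
Set $K:=\{g_n;\ n\in\N\}$.

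First we show that $K$ is bounded but not relatively compact in $L^2(\R^+)$.
\begin{itemize}
\item In the orthonormal case, $\|g_n\|_{L^2(\R^+)}=1$ and $\|g_n-g_k\|_{L^2(\R^+)}=\sqrt2$ for $n\neq k$, so no subsequence is Cauchy.
\item In the Riesz case, write $U(g_n)=e_n$ with $\{e_n\}$ orthonormal and $U$ a bounded invertible operator on $L^2(\R^+)$. As in Corollary~\ref{corRiesz}, we get $\|g_n\|\le\|U^{-1}\|$ and $\|g_n-g_k\|\ge\sqrt2/\|U\|$ for $n\ne k$. Again no subsequence converges.
\end{itemize}
In both cases the limit in \eqref{19} is nonzero by Theorem~\ref{thMellin1}. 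This limit exists because the integral is decreasing in $\xi$.

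Next we use the pointwise domination, which holds for every $n$ and almost every point:
\[
\int_{\R^+\setminus[\frac1\xi,\xi]}|g_n(x)|^2\frac{dx}{x}+\int_{\R\setminus[-\xi,\xi]}|\M g_n(\omega)|^2d\omega
\le\int_{\R^+\setminus[\frac1\xi,\xi]}|\varphi(x)|^2\frac{dx}{x}+\int_{\R\setminus[-\xi,\xi]}|\psi(\omega)|^2d\omega .
\]
The right-hand side does not depend on $n$. Since $\varphi\in L^2(\R^+)$ and $\psi\in L^2(\R)$, it tends to $0$ as $\xi\to+\infty$ by dominated (or monotone) convergence. Taking the supremum over $n$ shows that \eqref{19} holds, which contradicts the previous step.

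The argument has no real obstacle. The only point needing care is the Riesz case: there the bound $\|g_n-g_k\|\ge\sqrt2/\|U\|$ is what rules out convergent subsequences, and it also requires the Riesz sequence to be infinite, as in Corollary~\ref{corRiesz}. Everything else is a direct application of Theorem~\ref{thMellin1}.
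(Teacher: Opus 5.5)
Your proof is correct and takes the route the paper intends. The paper gives no separate proof; it says only that the corollary follows easily from Theorem~\ref{thMellin1} as in Corollaries~\ref{corRiha} and~\ref{corRiesz}, namely that domination by $\sup_n|g_n|\in L^2(\R^+)$ and $\sup_n|\M g_n|\in L^2(\R)$ would force \eqref{19} for the bounded, non-relatively-compact set $\{g_n\}$, and your handling of the Riesz case via $\|g_n\|\le\|U^{-1}\|$ and $\|g_n-g_k\|\ge\sqrt2/\|U\|$ matches the paper's Corollary~\ref{corRiesz}.
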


\begin{Cor}
\label{corMellin2}
Let $\varphi\in L^2(\R^+)$, $\psi\in L^2(\R)$ and $\{g_n\}_{n\in J}$, for some $J\subseteq\N$,  an orthonormal sequence (or a Riesz sequence) of functions in $L^2(\R^+)$
that satisfy, for all $n\in J$ and for almost every $x\in\R^+$, $\omega\in\R$,
\beqsn
|g_n(x)|\leq|\varphi(x)|,\quad|\M g_n(\omega)|\leq|\psi(\omega)|.
\eeqsn
Then the sequence $\{g_n\}_{n\in J}$ must be finite.
\end{Cor}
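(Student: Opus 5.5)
We argue by contradiction, exactly as in Corollary~\ref{corRiha}. Suppose $J$ is infinite; without loss of generality we take $J=\N$. In the orthonormal case $\|g_n-g_k\|_{L^2(\R^+)}=\sqrt2$ for $n\neq k$, so no subsequence of $\{g_n\}$ can converge. Hence $K:=\{g_n;\ n\in\N\}$ is a bounded subset of $L^2(\R^+)$ that is not relatively compact. In the Riesz case we write $g_n=U^{-1}h_n$ with $\{h_n\}$ orthonormal and $U$ bounded and invertible. Then
\[
\sqrt2=\|h_n-h_k\|\leq\|U\|\,\|g_n-g_k\| \quad\text{and}\quad \|g_n\|\leq\|U^{-1}\|,
\]
as in the proof of Corollary~\ref{corRiesz}. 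So again $K$ is bounded and has no Cauchy subsequence, hence is not relatively compact.

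By Theorem~\ref{thMellin1}, since $K$ is bounded but not relatively compact, condition \eqref{19} must fail. The quantity inside the limit in \eqref{19} is nonincreasing in $\xi$, so the limit exists, and we get
\[
\lim_{\xi\to+\infty}\sup_{n\in\N}\left(\int_{\R^+\setminus\left[\frac1\xi,\xi\right]}|g_n(x)|^2\frac{dx}{x}+\int_{\R\setminus[-\xi,\xi]}|\M g_n(\omega)|^2d\omega\right)>0 .
\]

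On the other hand, the pointwise domination gives, for every $n$ and every $\xi>1$,
\[
\int_{\R^+\setminus\left[\frac1\xi,\xi\right]}|g_n(x)|^2\frac{dx}{x}+\int_{\R\setminus[-\xi,\xi]}|\M g_n(\omega)|^2d\omega
\leq\int_{\R^+\setminus\left[\frac1\xi,\xi\right]}|\varphi(x)|^2\frac{dx}{x}+\int_{\R\setminus[-\xi,\xi]}|\psi(\omega)|^2d\omega .
\]
The right-hand side does not depend on $n$. It tends to $0$ as $\xi\to+\infty$ by dominated convergence, because $\varphi\in L^2(\R^+)$ (square integrable with respect to $r^{-1}dr$) and $\psi\in L^2(\R)$. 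Taking the supremum over $n$ and letting $\xi\to+\infty$ contradicts the strict positivity above. Therefore $J$ is finite.

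No step here is a real obstacle, since all the work is in Theorem~\ref{thMellin1}. The only points needing care are two. First, the truncation set $\R^+\setminus[1/\xi,\xi]$ must be measured with the Haar measure, so that $\varphi\in L^2(\R^+)$ gives exactly the required tail decay. Second, in the Riesz case the separation estimate must be written out to confirm that $K$ is not relatively compact.
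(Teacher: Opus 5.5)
Your proposal is correct and is essentially the paper's argument. The paper gives no separate proof here: it obtains the corollary from Theorem~\ref{thMellin1} by the same contradiction as in Corollary~\ref{corRiha}, handling the Riesz case as in Corollary~\ref{corRiesz}, and you carry out exactly that argument with the Haar-measure tails written out.
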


Here again more general sequences $\{g_n\}$ may be considered (see Remark~\ref{remframe}).

\vspace*{3mm}
Let us now consider the {\em affine Wigner distribution} of $f\in L^2(\R^+)$ as introduced
in \cite{BBL}:
\beqsn
W_{\Aff}f (\omega,r):=\int_{-\infty}^{+\infty}
f\left(\frac{rue^u}{e^u-1}\right)\overline{f\left(\frac{ru}{e^u-1}\right)}
e^{-2\pi i\omega u}du,
\eeqsn
for $(\omega,r)$ in the {\em affine group} $\Aff:=\R\times\R^+$.

By \cite[Prop.~3.5]{BBL}, if
$f\in\Sch(\R^+)$, then the affine Wigner distribution yields the correct marginal
densities:
\beqsn
&&\int_{-\infty}^{+\infty}W_{\Aff}f (\omega,r)d\omega=|f(r)|^2,\qquad r\in\R^+,\\
&&\int_{0}^{+\infty}W_{\Aff}f (\omega,r)\frac{dr}{r}=|\M f(\omega)|^2,\qquad\omega\in\R.
\eeqsn
Then
\beqsn
&&\int_{\R^+\setminus\left[\frac1\xi,\xi\right]}|f(r)|^2\frac{dr}{r}
+\int_{|\omega|>\xi}|\M f(\omega)|^2d\omega\\
=&&\int_{\R^+\setminus\left[\frac1\xi,\xi\right]}
\left(\int_{-\infty}^{+\infty}W_{\Aff}f (\omega,r)d\omega\right)\frac{dr}{r}
+\int_{|\omega|>\xi}\left(\int_{0}^{+\infty}W_{\Aff}f (\omega,r)\frac{dr}{r}\right)d\omega\\
\leq&&2
\int_{(\R\times\R^+)\setminus\left([-\xi,\xi]\times\left[\frac1\xi,\xi\right]\right)}
|W_{\Aff}f (\omega,r)|d\omega\frac{dr}{r}.
\eeqsn

From Theorem~\ref{thMellin1} we thus obtain:
\begin{Cor}
\label{corWaff1}
Let $K\subseteq\Sch(\R^+)$ be a bounded set in $L^2(\R^+)$. 
A sufficient condition in order that $K$ is relatively compact in $L^2(\R^+)$ is that
\beqsn
\lim_{\xi\to+\infty}\sup_{f\in K}
\int_{(\R\times\R^+)\setminus\left([-\xi,\xi]\times\left[\frac1\xi,\xi\right]\right)}
|W_{\Aff}f (\omega,r)|d\omega\frac{dr}{r}=0.
\eeqsn
\end{Cor}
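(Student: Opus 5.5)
The plan is to deduce the statement directly from Theorem~\ref{thMellin1}, using the marginal-density inequality displayed just before the corollary. Fix $f\in K$ and $\xi>1$, and set
\[
\Omega_\xi:=(\R\times\R^+)\setminus\left([-\xi,\xi]\times\left[\tfrac1\xi,\xi\right]\right).
\]
Since $f\in\Sch(\R^+)$, \cite[Prop.~3.5]{BBL} gives both marginals of $W_{\Aff}f$. Integrating the first marginal over $r\in\R^+\setminus[\frac1\xi,\xi]$ against $dr/r$, and the second over $|\omega|>\xi$ against $d\omega$, rewrites the quantity in \eqref{19} as
\[
\int_{\R^+\setminus[\frac1\xi,\xi]}\int_{\R}W_{\Aff}f\,d\omega\frac{dr}{r}+\int_{|\omega|>\xi}\int_0^{+\infty}W_{\Aff}f\,\frac{dr}{r}d\omega .
\]

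Next I bound each term by $\int_{\Omega_\xi}|W_{\Aff}f|\,d\omega\frac{dr}{r}$. The left-hand sides are nonnegative, since they equal integrals of $|f|^2$ and $|\M f|^2$. So each term is at most the corresponding integral of $|W_{\Aff}f|$. The first domain $\R\times(\R^+\setminus[\frac1\xi,\xi])$ and the second domain $\{|\omega|>\xi\}\times\R^+$ are both contained in $\Omega_\xi$. This yields
\[
\int_{\R^+\setminus[\frac1\xi,\xi]}|f(r)|^2\frac{dr}{r}+\int_{|\omega|>\xi}|\M f(\omega)|^2d\omega\le 2\int_{\Omega_\xi}|W_{\Aff}f(\omega,r)|\,d\omega\frac{dr}{r}.
\]

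Taking the supremum over $f\in K$ and letting $\xi\to+\infty$, the hypothesis forces the left side of \eqref{19} to tend to $0$. Since $K$ is bounded in $L^2(\R^+)$, Theorem~\ref{thMellin1} then gives relative compactness.

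The only delicate point is justifying Fubini, and hence the iterated-integral manipulations. If $W_{\Aff}f\in L^1(\Omega_\xi,d\omega\,dr/r)$ everything is legitimate. If instead the integral of $|W_{\Aff}f|$ over $\Omega_\xi$ is infinite for some $f$ and $\xi$, the bound holds trivially. Since the hypothesis forces these integrals to be finite for $\xi$ large, the argument goes through in the range that matters. Absolute integrability of the inner integrals for fixed $r$ or $\omega$ is implicit in \cite[Prop.~3.5]{BBL} for Schwarz functions, and I would assume it from there.
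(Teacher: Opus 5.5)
Your proof is correct and is essentially the paper's argument. The two marginal identities for $f\in\Sch(\R^+)$ bound the quantity in \eqref{19} by $2\int_{\Omega_\xi}|W_{\Aff}f|\,d\omega\frac{dr}{r}$, since both strips lie in $\Omega_\xi$, and Theorem~\ref{thMellin1} then gives relative compactness. Your Fubini caveat is milder than you suggest: it is enough to have $\bigl|\int\!\!\int W_{\Aff}f\bigr|\le\int\!\!\int|W_{\Aff}f|$ for the iterated integrals, together with Tonelli for the nonnegative function $|W_{\Aff}f|$.
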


\begin{Cor}
\label{corWaff2}
If $\{g_n\}_{n\in\N}\subseteq\Sch(\R^+)$ is an orthonormal sequence (or a Riesz sequence) in
$L^2(\R^+)$, then 
\beqsn
\sup_{n\in\N}|W_{\Aff}\,{g_n}(\omega,r)|\not\in L^2(\R\times\R^+).
\eeqsn
\end{Cor}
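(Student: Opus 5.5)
The plan mirrors the proof of Corollary~\ref{cor1}, with Corollary~\ref{corWaff1} playing the role of Proposition~\ref{prop1}. I argue by contradiction. Put $K:=\{g_n\}_{n\in\N}\subseteq\Sch(\R^+)$. This set is bounded in $L^2(\R^+)$: for an orthonormal sequence all norms equal $1$, and for a Riesz sequence $\|g_n\|\leq\|U^{-1}\|$. It is not relatively compact, because $\|g_n-g_k\|_{L^2(\R^+)}=\sqrt2$ in the orthonormal case, and $\|g_n-g_k\|\geq\sqrt2/\|U\|$ in the Riesz case, exactly as in Corollary~\ref{corRiesz}. Hence the sufficient condition of Corollary~\ref{corWaff1} must fail:
\beqsn
\lim_{\xi\to+\infty}\sup_{n\in\N}\int_{(\R\times\R^+)\setminus\left([-\xi,\xi]\times\left[\frac1\xi,\xi\right]\right)}|W_{\Aff}g_n(\omega,r)|\,d\omega\frac{dr}{r}>0 .
\eeqsn

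Now suppose, for contradiction, that $\sup_n|W_{\Aff}g_n|\leq\psi$ with $\psi\in L^2(\R\times\R^+,d\omega\,\frac{dr}{r})$. As in Corollary~\ref{cor1}, domination gives
\beqsn
\sup_n\int_{\complement([-\xi,\xi]\times[\frac1\xi,\xi])}|W_{\Aff}g_n|^2\,d\omega\frac{dr}{r}\leq\int_{\complement([-\xi,\xi]\times[\frac1\xi,\xi])}|\psi|^2\,d\omega\frac{dr}{r}\longrightarrow0 .
\eeqsn
The main obstacle is that this controls the tails of $|W_{\Aff}g_n|^2$, whereas Corollary~\ref{corWaff1} needs the tails of $|W_{\Aff}g_n|$ in $L^1$. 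The passage from $L^2$ to $L^1$ is exactly the point glossed over in Corollary~\ref{cor1}, so it needs a separate argument.

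To close this gap I would first try an $L^2$ version of the criterion, modelled on Proposition~\ref{prop2} and Lemma~\ref{lemma1}. Write $F=E^*f$ and $s=\log r$. A direct computation gives
\beqsn
W_{\Aff}f(\omega,e^s)=\int_{\R}F\bigl(s+\lambda(u)+\tfrac u2\bigr)\overline{F\bigl(s+\lambda(u)-\tfrac u2\bigr)}e^{-2\pi i\omega u}du,
\qquad
\lambda(u)=\log\frac{u/2}{\sinh(u/2)} .
\eeqsn
So $W_{\Aff}f=\F_2\mathcal T(F\otimes\bar F)$, where $\mathcal T G(s,u)=G\bigl(s+\lambda(u)+\frac u2,\,s+\lambda(u)-\frac u2\bigr)$. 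The change of variables $(s,u)\mapsto\bigl(s+\lambda(u)+\frac u2,\,s+\lambda(u)-\frac u2\bigr)$ has Jacobian of absolute value $1$, so $\mathcal T$ is an isometry of $L^2(\R^2)$.

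It follows that $f\mapsto W_{\Aff}f$ carries $K$ isometrically, in the sense of \eqref{17}, onto a set that is relatively compact if and only if $\{F\otimes\bar F\}$ is, hence if and only if $K$ is, by Lemma~\ref{lemma1} and Theorem~\ref{thFK2}. In particular $\|W_{\Aff}g_n-W_{\Aff}g_k\|=\|g_n\otimes\bar g_n-g_k\otimes\bar g_k\|$ is bounded below, so $\{W_{\Aff}g_n\}$ has no convergent subsequence in $L^2$.

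Uniform tail control alone does not give compactness in $L^2$; equicontinuity is also needed. So I would then try to get a second condition of the type in Proposition~\ref{prop2}$(iii)$, namely a condition on $(\F_2\mathcal T)^{-1}$, i.e.\ on the tails of $\mathcal T(G_n\otimes\bar G_n)$ with $G_n=E^*g_n$. I expect this step to be the genuine difficulty. If it cannot be extracted from the domination by $\psi$, I would fall back on the literal argument of Corollary~\ref{cor1}, with the extra assumption that the domination holds with $\psi\in L^1\cap L^2$, so that the $L^1$ tails tend to zero and contradict the first display.
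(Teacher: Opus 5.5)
Your proposal does not prove the statement. You are right that a majorant $\psi\in L^2$ only makes the tails of $|W_{\Aff}g_n|^2$ uniformly small, whereas Corollary~\ref{corWaff1} needs the tails of $|W_{\Aff}g_n|$ in $L^1$. The paper gives no separate proof of Corollary~\ref{corWaff2}. The argument it relies on is that of Corollary~\ref{cor1}, transplanted, and it has exactly this mismatch, so as written it only excludes an $L^1$ majorant.

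However, you leave the decisive step open. A pointwise bound on $|W_{\Aff}g_n|$ says nothing about the tails of $\mathcal T(G_n\otimes\bar G_n)=\F_2^{-1}W_{\Aff}g_n$. So the ``second condition'' route modelled on Proposition~\ref{prop2}$(iii)$ does not go through, and your fallback with $\psi\in L^1\cap L^2$ proves a weaker statement. Your diagnosis also aims at the wrong target. You do not need relative compactness (hence equicontinuity) of $\{W_{\Aff}g_n\}$ in $L^2(\R^2)$, only condition \eqref{19} for $K=\{g_n\}$.

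The missing idea is that, thanks to the rank-one structure, $L^2$-tails of $W_{\Aff}f$ control the tails of $f$ directly. Apply your own formula, Plancherel in $u$, and the unit-Jacobian change of variables $(s,u)\mapsto(a,b)$. With $F=E^*f$ and $S(a,b):=\frac{a+b}2-\lambda(a-b)$ this gives
\[
\int_{\R\times(\R^+\setminus[\frac1\xi,\xi])}|W_{\Aff}f|^2\,d\omega\frac{dr}{r}
=\iint_{|S(a,b)|>\log\xi}|F(a)|^2|F(b)|^2\,da\,db
\geq\frac12\Bigl(\int_{\R^+\setminus[\frac1\xi,\xi]}|f(r)|^2\frac{dr}{r}\Bigr)^2 .
\]
The inequality holds because $0\le-\lambda(u)\le|u|/2$ forces $\min\{a,b\}\le S(a,b)\le\max\{a,b\}$. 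Hence $\{|S|>\log\xi\}$ contains $\{a,b>\log\xi\}\cup\{a,b<-\log\xi\}$, whose measures are the squares of the one-sided tails. So the majorant yields the first half of \eqref{19}, uniformly in $n$.

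The Mellin half is where the real work lies. The lag-dependent shear $\lambda(u)$ destroys the exact symmetry $Wf(x,\omega)=W\hat f(\omega,-x)$ that makes the Euclidean case easy. One way to proceed:
\begin{enumerate}[(a)]
\item By the tightness just obtained and $g_n\rightharpoonup0$, one gets $\|1_{[-L,L]}\M g_n\|_{L^2(\R)}\to0$ for each $L$.
\item By tightness again you may cut the lag to $|u|\le C$, where $\theta:=\lambda'+\frac12\in[\theta_0,1-\theta_0]$.
\item Since $\frac{d}{du}(a\eta-b\eta')=\theta\eta+(1-\theta)\eta'$, a non-stationary phase argument shows the following. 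The diagonal terms built from the part of $g_n$ with Mellin frequencies in $(L,\infty)$, resp.\ in $(-\infty,-L)$, have $o(1)$ mass in $\{|\omega|\le L'\}$ as $L\to\infty$.
\item The two cross terms have total squared norm $2pq\le\frac12\|g_n\|^4$, where $p,q$ are the squared norms of the two parts.
\end{enumerate}
Thus at least half of $\|W_{\Aff}g_n\|^2=\|g_n\|^4$ escapes to $|\omega|\to\infty$. This contradicts $\sup_n\int_{|\omega|>L'}\int_0^{+\infty}|W_{\Aff}g_n|^2\frac{dr}{r}d\omega\le\int_{|\omega|>L'}\int_0^{+\infty}|\psi|^2\frac{dr}{r}d\omega\to0$.
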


Let us now introduce the following {\em Mellin-Rihaczek distribution} for
$f\in L^2(\R^+)$:
\beqsn
R_\M f(\omega,r):=r^{-2\pi i\omega}f(r)\cdot\overline{\M f(\omega)},\qquad
(\omega,r)\in\R\times\R^+.
\eeqsn

It satisfies the marginal densities:
\begin{Prop}
\label{propmarginalsMR}
Let $f\in L^2(\R^+)$. Then
\beqs
\label{30}
&&\int_{-\infty}^{+\infty}R_\M f(\omega,r)d\omega=| f(r)|^2,\qquad r\in \R^+,\\
\label{29}
&&\int_0^{+\infty}R_\M f(\omega,r)\frac{dr}{r}=|\M f(\omega)|^2,\qquad\omega\in \R.
\eeqs
\end{Prop}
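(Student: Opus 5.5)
The plan is to reduce both identities to Mellin inversion, using the factorization $\M f=\F(E^*f)$ established before \eqref{22}. The key observation is that, in each marginal, one factor of $R_\M f$ does not depend on the integration variable. So the integral of $R_\M f$ splits as that factor times either a Mellin transform or an inverse Mellin transform. In this respect the proof mirrors the Euclidean Rihaczek case.

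For \eqref{29} I fix $\omega\in\R$ and pull the constant $\overline{\M f(\omega)}$ out of the $r$-integral:
\beqsn
\int_0^{+\infty}R_\M f(\omega,r)\frac{dr}{r}=\overline{\M f(\omega)}\int_0^{+\infty}f(r)r^{-2\pi i\omega}\frac{dr}{r}=\overline{\M f(\omega)}\,\M f(\omega)=|\M f(\omega)|^2 .
\eeqsn
This is exactly the definition of $\M f$ and needs nothing else.

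For \eqref{30} I fix $r\in\R^+$ and pull out $f(r)$:
\beqsn
\int_{-\infty}^{+\infty}R_\M f(\omega,r)d\omega=f(r)\,\overline{\int_{-\infty}^{+\infty}\M f(\omega)\,r^{2\pi i\omega}d\omega}.
\eeqsn
Writing $r=e^x$ gives $r^{2\pi i\omega}=e^{2\pi i\omega x}$, so the integral on the right is $\F^{-1}(\F(E^*f))(x)=E^*f(x)=f(e^x)=f(r)$ by Fourier inversion. The right-hand side is therefore $f(r)\overline{f(r)}=|f(r)|^2$.

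The main obstacle is giving meaning to these integrals for a general $f\in L^2(\R^+)$. There $f(r)r^{-2\pi i\omega}$ is only in $L^2(dr/r)$, and $\M f$ is only in $L^2(\R)$, so neither integral need converge absolutely. I would first prove the identities under the extra hypothesis that $E^*f\in L^1(\R)$ (for \eqref{29}) and $\M f\in L^1(\R)$ (for \eqref{30}), e.g.\ for $f\in\Sch(\R^+)$. In that case the computations above are literal and hold pointwise, with Fourier inversion valid at every point for continuous $E^*f$. For general $f$, the integrals are understood as $L^2$-limits of truncated integrals over $[1/\xi,\xi]$ and $[-\xi,\xi]$, in the usual Plancherel sense. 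Then \eqref{29} holds by the definition of the $L^2$ Mellin transform, and \eqref{30} holds for almost every $r$ by $L^2$ Fourier inversion applied to $E^*f$ and transported back through the isometry \eqref{24}. Multiplying by the fixed factor $f(r)$, respectively $\overline{\M f(\omega)}$, commutes with these limits pointwise, so the identities pass to the limit.
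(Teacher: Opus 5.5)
Your proposal is correct and follows the paper's proof: \eqref{29} comes from pulling $\overline{\M f(\omega)}$ out of the $r$-integral and reading off the definition of $\M f$. Likewise \eqref{30} comes from pulling out $f(r)$ and applying Mellin inversion, which you phrase as Fourier inversion for $E^*f$ (the same thing, since $\M^{-1}F(e^x)=\F^{-1}F(x)$). Your extra paragraph on interpreting the integrals for general $f\in L^2(\R^+)$ addresses a point the paper's formal computation skips. Be careful there: the truncated integrals converge only in $L^2$ norm, so after multiplying by the fixed $L^2$ factor the identities hold as $L^1$-limits, and pointwise only a.e.\ along a subsequence (or, for the full limit, via Carleson's theorem), not by simply commuting the multiplication with the limit pointwise.
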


\begin{proof}
The proof of \eqref{29} is straightforward from the definition of the Mellin-Rihaczek distribution:
\beqsn
\int_0^{+\infty}R_\M f(\omega,r)\frac{dr}{r}=&&
\int_0^{+\infty}r^{-2\pi i\omega}f(r)\overline{\M f(\omega)}
\frac{dr}{r}\\
=&&\overline{\M f(\omega)}\cdot\M f(\omega)
=|\M f(\omega)|^2.
\eeqsn

To prove \eqref{30} we just need to recall (see  for instance \cite[(3.3)]{BBL}) that the inverse of the Mellin transform
$\M: L^2(\R^+)\to L^2(\R)$ is the following $\M^{-1}: L^2(\R)\to L^2(\R^+)$:
\beqsn
\M^{-1}F(r)=\int_{-\infty}^{+\infty}F(x)r^{2\pi i x}dx,\qquad r\in\R^+.
\eeqsn
Then $\M^{-1}\M f(r)=f(r)$ and
\beqsn
&&\int_{-\infty}^{+\infty}R_\M f(\omega,r)d\omega
=\int_{-\infty}^{+\infty}r^{-2\pi i\omega}f(r)\overline{\M f(\omega)}d\omega\\
=&&f(r)\cdot\overline{\int_{-\infty}^{+\infty}\M f(\omega)r^{2\pi i\omega}d\omega}
=f(r)\cdot\overline{f(r)}=|f(r)|^2.
\eeqsn
\end{proof}

From Theorem~\ref{thMellin1} it follows:
\begin{Th}
\label{thMR}
A bounded set $K\subseteq L^2(\R^+)$ is relatively compact in $L^2(\R^+)$ if and only if
\beqsn
\lim_{\xi\to+\infty}\sup_{f\in K}
\int_{(\R\times\R^+)\setminus\left([-\xi,\xi]\times\left[\frac1\xi,\xi\right]\right)}
|R_\M f(\omega,r)|^2d\omega\frac{dr}{r}=0.
\eeqsn
\end{Th}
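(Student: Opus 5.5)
The plan is to copy the proof of Theorem~\ref{th1}, with Theorem~\ref{thMellin1} playing the role of Theorem~\ref{thRSST21}.

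Since $|R_\M f(\omega,r)|^2=|f(r)|^2|\M f(\omega)|^2$, we set
\[
D^+_\xi:=\left\{(\omega,r)\in\R\times\R^+:\ r\notin\left[\tfrac1\xi,\xi\right]\right\},\qquad
E^+_\xi:=\{(\omega,r)\in\R\times\R^+:\ |\omega|>\xi\}.
\]
Their union is exactly the complement $(\R\times\R^+)\setminus\left([-\xi,\xi]\times\left[\frac1\xi,\xi\right]\right)$. By Tonelli and the isometry \eqref{22}, integrating over $D^+_\xi$ gives
\[
\int_{D^+_\xi}|R_\M f|^2\,d\omega\frac{dr}{r}=\|f\|^2_{L^2(\R^+)}\int_{\R^+\setminus\left[\frac1\xi,\xi\right]}|f(r)|^2\frac{dr}{r},
\]
and integrating over $E^+_\xi$ gives
\[
\int_{E^+_\xi}|R_\M f|^2\,d\omega\frac{dr}{r}=\|f\|^2_{L^2(\R^+)}\int_{|\omega|>\xi}|\M f(\omega)|^2d\omega .
\]
Let $I_\xi(f)$ denote the bracket in \eqref{19}. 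Since the integral over a union lies between the larger of the two pieces and their sum, we get
\[
\tfrac12\|f\|^2_{L^2(\R^+)}\,I_\xi(f)\le\int_{\complement}|R_\M f|^2\,d\omega\frac{dr}{r}\le\|f\|^2_{L^2(\R^+)}\,I_\xi(f).
\]

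For necessity, assume $K$ is relatively compact. Theorem~\ref{thMellin1} gives $\sup_{f\in K}I_\xi(f)\to0$. Boundedness of $K$, $\|f\|^2\le C$, together with the upper bound then gives the stated condition.

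For sufficiency, the lower bound gives
\[
\lim_{\xi\to+\infty}\sup_{f\in K}\|f\|^2_{L^2(\R^+)}\,I_\xi(f)=0 .
\]
The main step is to remove the weight $\|f\|^2$, which may be small for some $f$. We argue by contradiction, as in the proof of Theorem~\ref{th1}. Suppose that for some $\varepsilon>0$ there are arbitrarily large $\xi$ and $f^*\in K$ with $I_\xi(f^*)>\varepsilon$. We always have $I_\xi(f^*)\le\|f^*\|^2+\|\M f^*\|^2=2\|f^*\|^2$ by \eqref{22}, so $\|f^*\|^2>\varepsilon/2$. Hence $\|f^*\|^2I_\xi(f^*)>\varepsilon^2/2$ for arbitrarily large $\xi$, which is a contradiction. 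Therefore \eqref{19} holds, and Theorem~\ref{thMellin1} shows that $K$ is relatively compact.

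The only point needing care is the Tonelli/Fubini computation with the Haar measure and the use of \eqref{22} for $\|\M f\|$. All other steps are routine.
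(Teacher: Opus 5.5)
Your proposal is correct and follows the paper's own proof. The paper computes the same two integrals of $|R_\M f|^2$ over $(\R\setminus[-\xi,\xi])\times\R^+$ and $\R\times(\R^+\setminus[\frac1\xi,\xi])$ using \eqref{22}, then defers to the argument of Theorem~\ref{th1}: the two-sided bound, followed by the contradiction argument that removes the weight $\|f\|^2_{L^2(\R^+)}$, which you carry out explicitly.
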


\begin{proof}
We can argue as in the proof of Theorem~\ref{th1} since
\beqsn
&&\int_{(\R\setminus[-\xi,\xi])\times\R^+}|R_\M f(\omega,r)|^2d\omega\frac{dr}{r}
=\int_{(\R\setminus[-\xi,\xi])\times\R^+}|f(r)|^2|\M f(\omega)|^2d\omega\frac{dr}{r}\\
=&&\|f\|^2_{L^2(\R^+)}\int_{|\omega|>\xi}|\M f(\omega)|^2d\omega
\eeqsn
and
\beqsn
&&\int_{\R\times\left(\R^+\setminus\left[\frac1\xi,\xi\right]\right)}
|R_\M f(\omega,r)|^2d\omega\frac{dr}{r}
=\int_{\R\times\left(\R^+\setminus\left[\frac1\xi,\xi\right]\right)}
|f(r)|^2|\M f(\omega)|^2d\omega\frac{dr}{r}\\
=&&\|\M f\|^2_{L^2(\R)}\int_{\R^+\setminus\left[\frac1\xi,\xi\right]}|f(r)|^2\frac{dr}{r}
=\|f\|^2_{L^2(\R^+)}\int_{\R^+\setminus\left[\frac1\xi,\xi\right]}|f(r)|^2\frac{dr}{r}
\eeqsn
by \eqref{22}.
\end{proof}

We finally get the umbrella theorem for the Mellin-Rihaczek distribution, arguing as in
Corollary~\ref{corRiha}:

\begin{Cor}
\label{corMR}
Let $\varphi\in L^2(\R\times\R^+)$ and $\{g_n\}_{n\in J}$, for some $J\subseteq\N$,  an orthonormal sequence (or a Riesz sequence) of functions in $L^2(\R^+)$
that satisfy, for all $n\in J$ and for almost every $(\omega,r)\in \R\times\R^+$,
\beqsn
|R_\M g_n(\omega,r)|\leq|\varphi(\omega,r)|.
\eeqsn
Then the sequence $\{g_n\}_{n\in J}$ must be finite.
\end{Cor}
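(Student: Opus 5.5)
We argue by contradiction, exactly as in the proof of Corollary~\ref{corRiha}, using Theorem~\ref{thMR} in place of Theorem~\ref{th1}. Suppose $J$ is infinite; without loss of generality $J=\N$.

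First we show that $K:=\{g_n;\ n\in\N\}$ is bounded in $L^2(\R^+)$ but not relatively compact.
\begin{itemize}
\item In the orthonormal case, $\|g_n\|_{L^2(\R^+)}=1$ and $\|g_n-g_k\|_{L^2(\R^+)}=\sqrt2$ for $n\neq k$. Hence no subsequence is Cauchy, and so none converges.
\item In the Riesz case $U(u_n)=g_n$, we repeat the estimates of Corollary~\ref{corRiesz} verbatim in $L^2(\R^+)$. This gives $\|u_n\|_{L^2(\R^+)}\le\|U^{-1}\|$ and $\|u_n-u_k\|_{L^2(\R^+)}\ge\sqrt2/\|U\|$ for $n\ne k$. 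So again $K$ is bounded and has no convergent subsequence.
\end{itemize}

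Since $K$ is bounded and not relatively compact, Theorem~\ref{thMR} yields
\[
\lim_{\xi\to+\infty}\sup_{n\in\N}\int_{(\R\times\R^+)\setminus\left([-\xi,\xi]\times\left[\frac1\xi,\xi\right]\right)}|R_\M g_n(\omega,r)|^2d\omega\frac{dr}{r}>0 .
\]
This limit exists because the domain of integration decreases as $\xi$ increases.

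On the other hand, the pointwise domination $|R_\M g_n|\le|\varphi|$ gives, uniformly in $n$,
\[
\sup_{n\in\N}\int_{(\R\times\R^+)\setminus\left([-\xi,\xi]\times\left[\frac1\xi,\xi\right]\right)}|R_\M g_n|^2d\omega\frac{dr}{r}\le\int_{(\R\times\R^+)\setminus\left([-\xi,\xi]\times\left[\frac1\xi,\xi\right]\right)}|\varphi|^2d\omega\frac{dr}{r}.
\]

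The right-hand side tends to $0$ as $\xi\to+\infty$. To see this, note that the sets $[-\xi,\xi]\times[1/\xi,\xi]$ increase to $\R\times\R^+$. Since $\varphi\in L^2(\R\times\R^+,d\omega\,dr/r)$, dominated convergence (or monotone convergence applied to the complements) makes the tail integral vanish. This contradicts the positive limit above, so $J$ must be finite.

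There is no real obstacle here. The only point needing care is that $\varphi$ is square integrable with respect to the product of the Lebesgue and Haar measures, $d\omega\,dr/r$, which is the same measure appearing in Theorem~\ref{thMR}. We read $L^2(\R\times\R^+)$ in this sense, so the tail argument applies directly.
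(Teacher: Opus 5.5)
Your proof is correct and follows essentially the same route as the paper. The paper argues as in Corollary~\ref{corRiha}, with Corollary~\ref{corRiesz} covering Riesz sequences and Theorem~\ref{thMR} replacing Theorem~\ref{th1}; your reading of $L^2(\R\times\R^+)$ with respect to $d\omega\,\frac{dr}{r}$ is the one the paper intends.
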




\vspace{2cm}
\noindent {\bf Acknowledgments.} \\
The authors are grateful to Prof. N. Lerner for interesting discussions about the open problem stated in the Introduction. \\[1cm]
\noindent {\bf Statements and Declarations.} \\[0.1cm]
{\bf Funding.} \\
The authors were partially supported by the Projects FAR 2022,  FIRD 2022, FAR 2024, FIRD 2024 (University of Ferrara).
Boiti and Oliaro were partially supported by the GNAMPA-INdAM Projects 2024 
(CUP E53C23001670001) and 2026 (CUP E53C25002010001). Boiti was partially supported by the Italian Ministry of University and Research,
under PRIN2022 (Scorrimento) ``Anomalies in partial differential equations and applications", code: 2022HCLAZ8\_002, CUP:
J53C24002560006. \\[0.3cm]
{\bf Competing interests.} \\
The authors have no relevant financial or non-financial interests to disclose. \\[0.3cm]

\end{document}